\documentclass[11pt,reqno]{amsart}

\setlength{\textheight}{8.8in}
\setlength{\topmargin}{-.1in}

\parskip=.08in

\usepackage[T1]{fontenc}
\usepackage{tgpagella}
\usepackage{mathpazo}

\usepackage{hyperref} % for pdflatex
\usepackage{xcolor}
\usepackage{amsmath,amsthm}
\usepackage{amssymb}
\usepackage{graphicx}
\usepackage[mathscr]{eucal}
\usepackage{MnSymbol}
\usepackage[frame,ps,matrix,arrow,curve,rotate,all,2cell,tips]{xy}
\usepackage{epic,eepic}\setlength{\unitlength}{.35mm}
\usepackage{enumerate}
\usepackage{color,soul}

\setcounter{tocdepth}{1}

\newtheorem{theorem}[subsection]{Theorem}
\newtheorem{lemma}[subsection]{Lemma}

\theoremstyle{definition}
\newtheorem{definition}[subsection]{Definition}

\newtheorem{remark}[subsection]{Remark}
\newtheorem{assumption}[subsection]{Assumption}

\newtheorem{notation}[subsection]{Notation}

\numberwithin{equation}{subsection}

\newtheorem{problem}[subsection]{Problem}

\newcommand{\ac}{\scriptstyle \textrm{!`}}

 % Additive/Multiplicative group
 % Additive/Multiplicative group

\newcommand{\N}{\mathcal{N}}
\newcommand{\M}{\mathcal{M}}

\newcommand{\C}{\mathcal{C}}

\newcommand{\Ho}{\mathsf{Ho}}

\newcommand{\Comon}{\mathsf{Comon}}

\DeclareMathOperator{\Cobar}{Cobar}
\DeclareMathOperator{\sCoCoalg}{sCoCoalg}
\DeclareMathOperator{\dgAlg}{dgAlg}

 %trace

\DeclareMathOperator{\Spec}{Spec}

\newcommand{\po}{\ar@{}[dr]|(.7){\Searrow}}
\newcommand{\pb}{\ar@{}[dr]|(.3){\Nwarrow}}

\DeclareMathOperator{\map}{\mathsf{map}}
\newcommand{\mapm}{\map_{\M}}
\newcommand{\mapcoalgkm}{\map_{\coalgkm}}

\newcommand{\Ch}{\mathsf{Ch}}
\newcommand{\Chr}{\Ch_R}
\newcommand{\ChA}{\Ch_{\cat A}}
\newcommand{\Chk}{\Ch_k}
\newcommand{\Chrinj}{\Ch_{R,\mathsf{inj}}}
\newcommand{\Comonchrinj}{\Comon\left(\Chrinj\right)}
\newcommand{\Chrb}{\Chr^B}
\newcommand{\Chrinjb}{\Chrinj^B}

\newcommand{\lcchrinj}{L_{\C}\Chrinj}

\newcommand{\coalgpchr}{\coalg\left(P; \Chr\right)}
\newcommand{\coalgqchr}{\coalg\left(Q; \Chr\right)}
\newcommand{\coalgqchrinj}{\coalg\left(Q; \Chrinj\right)}
\newcommand{\coalgqlcchrinj}{\coalg\left(Q; L_{\C}\Chrinj\right)}

\newcommand{\cat}[1]{\mathcal{#1}}

\newcommand{\Sp}{\mathsf{Sp}^{\Sigma}}
\newcommand{\Spinj}{\Sp_{\mathsf{inj}}}
\newcommand{\Sppr}{\Sp_{\mathsf{pr}}}
\newcommand{\SpprE}{\Sp_{\mathsf{pr},E}}

\newcommand{\lcmoda}{L_{\C}\Moda}
\newcommand{\lcmodab}{\left(\lcmoda\right)^B}

\hyphenation{co-fi-bra-tion co-fi-bra-tions}
\hyphenation{mo-noid-al}
\hyphenation{Quil-len}

\hfuzz2pt

\newcommand{\adjoint}{
\nicearrow\xymatrix{ \ar@<2pt>[r] & \ar@<2pt>[l]}}
\renewcommand{\hookrightarrow}{\nicexy{\ar@{^{(}->}[r] &}}
\newcommand{\nicearrow}{\SelectTips{cm}{10}}
\newcommand{\nicexy}{\nicearrow\xymatrix@C+5pt}

\newcommand{\pushout}{\ar@{}[dr]|(0.75){\Searrow}}
\newcommand{\drrpushout}{\ar@{}[drr]|(0.90){\Searrow}}

\renewcommand{\to}{\hspace{-.1cm}\nicearrow\xymatrix@C-.2cm{\ar[r]&}\hspace{-.1cm}}

\newcommand{\sO}{\mathsf{O}}

\newcommand{\sS}{\mathsf{S}}

\newcommand{\xtilde}{\widetilde{X}}

\newcommand{\andspace}{\qquad\text{and}\qquad}

\newcommand{\forallspace}{\quad\text{for all}\quad}
\newcommand{\ifspace}{\quad\text{if}\quad}

\renewcommand{\lim}{\mathsf{lim}\,}

\DeclareMathOperator{\CoEnd}{CoEnd}
\DeclareMathOperator{\sMod}{sMod}
\DeclareMathOperator{\sPre}{sPre}
\DeclareMathOperator{\Hom}{Hom}

\DeclareMathOperator{\Id}{Id}

\newcommand{\Comod}{\mathsf{Comod}}
\newcommand{\Mod}{\mathsf{Mod}}
\newcommand{\Moda}{\Mod_A}
\newcommand{\Modab}{\Moda^B}

\newcommand{\tensorunit}{\mathtt{1}}

\newcommand{\lcm}{L_\C\M}
\newcommand{\LC}{L_{\C}}
\newcommand{\Lcprime}{L_{\C'}}

\newcommand{\RC}{R_{\C}}

\newcommand{\coalg}{\mathsf{Coalg}}
\newcommand{\coalgk}{\coalg(K)}
\newcommand{\coalgkm}{\coalg(K;\M)}

\newcommand{\coalgklcm}{\coalg(K;\lcm)}

\newcommand{\RKC}{R^{K}_{\C}}

\newcommand{\QK}{Q^{K}}

\newcommand{\Uinv}{U^{-1}}

\newcommand{\Cof}{\mathsf{Cof}}

\newcommand{\We}{\mathsf{WE}}

\newcommand{\Cofm}{\Cof_{\M}}

\newcommand{\Wem}{\We_{\M}}

\newcommand{\Coflcm}{\Cof_{\lcm}}

\newcommand{\Welcm}{\We_{\lcm}}

\begin{document}

\title{Comonadic Coalgebras and Bousfield Localization}

\author{David White}
\address{Department of Mathematics \\ Denison University
\\ Granville, OH}
\email{david.white@denison.edu}

\author{Donald Yau}
\address{Department of Mathematics \\ The Ohio State University at Newark}
\email{yau.22@osu.edu}

\begin{abstract}
For a model category, we prove that taking the category of coalgebras over a comonad commutes with left Bousfield localization in a suitable sense. Then we prove a general existence result for the left-induced model structure on the category of coalgebras over a comonad in a left Bousfield localization. Next we provide several equivalent characterizations of when a left Bousfield localization preserves coalgebras over a comonad. These results are illustrated with many applications in chain complexes, (localized) spectra, the stable module category, and simplicial settings. 
\end{abstract}

\maketitle

\tableofcontents

\section{Introduction}

This paper sits at the intersection of two important threads in homotopy theory: Bousfield localization and comonad.  For a model category $\M$ and a class of morphisms $\C$ in $\M$, the left Bousfield localization $\lcm$, if it exists, is the model structure on $\M$ with the same cofibrations and with the morphisms in $\C$ turned into weak equivalences.  Bousfield localization is a ubiquitous tool in homotopy theory, going back at least as far back as \cite{bousfield} and continuing with the work of Hopkins, Ravenel \cite{ravenel}, and many others.  On the other hand, many interesting objects in homotopy theory arise as coalgebras over comonads, such as comonoids and comodules over a comonoid.

Suppose the left Bousfield localization $\lcm$ exists, and $K$ is a comonad on $\M$ whose category of coalgebras $\coalgkm$ admits a left-induced model structure over $\M$. This means a morphism of $K$-coalgebras is a weak equivalence (resp., cofibration) if and only if its underlying morphism in $\M$ is so. The main aim of this paper is to answer the following inter-related questions, first posed in \cite{white-thesis} (Problem 9.1.10).
\begin{description}
\item[(A)] If the category of $K$-coalgebras $\coalgklcm$ admits a left-induced model structure over $\lcm$, is it equal to a left Bousfield localization of $\coalgkm$?
\item[(B)] When does the category of $K$-coalgebras $\coalgklcm$ admit a left-induced model structure over $\lcm$?
\item[(C)] When does the left Bousfield localization $\LC$ preserve $K$-coalgebras?
\end{description}
The following result, which will be proved as Theorem \ref{clockwise.ccw}, provides a positive answer to question (A).

\begin{theorem}
Suppose $\coalgkm$ admits a left-induced model structure over $\M$. Then the following two statements are equivalent.
\begin{enumerate}
\item $\coalgklcm$ admits the left-induced model structure via the forgetful functor $U : \coalgklcm \to \lcm$.
\item The left Bousfield localization $\Lcprime \coalgkm$ exists, where $\C'$ is defined in Assumption \ref{standing.assumptions}.
\end{enumerate}
Furthermore, if either statement is true, then there is an equality
\[\coalgklcm = \Lcprime \coalgkm\]
of model categories.
\end{theorem}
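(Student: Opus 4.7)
The plan is to leverage the fact that both candidate model structures live on the common underlying category $\coalgkm$ and necessarily share the same class of cofibrations, so the entire question reduces to identifying their weak equivalences as the same class and then invoking uniqueness of model structures.

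First I would check that the cofibrations match. In structure (1), cofibrations are maps $f$ with $U(f)$ a cofibration in $\lcm$; since left Bousfield localization does not alter cofibrations, this coincides with the cofibrations of $\coalgkm$. In structure (2), cofibrations are unchanged by definition of $\Lcprime$. Hence both candidate model structures share the cofibrations of $\coalgkm$, and the theorem will follow once one shows their weak equivalences agree. The heart of the argument is therefore the identification
\[\bigl\{\, f \in \coalgkm \,:\, U(f) \text{ is a } \C\text{-local equivalence in } \M \,\bigr\} \;=\; \bigl\{\, \C'\text{-local equivalences in } \coalgkm \,\bigr\},\]
where $\C'$ is as in Assumption \ref{standing.assumptions}. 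The left-to-right inclusion should follow by observing that if $W \in \M$ is $\C$-local then the cofree coalgebra $R(W)$ is $\C'$-local, via the adjunction
\[\hom_{\coalgkm}(X, R(W)) \cong \hom_{\M}(U(X), W),\]
and the reverse inclusion uses that $\C'$ is set up in Assumption \ref{standing.assumptions} precisely so that every $\C'$-local object in $\coalgkm$ has $\C$-local underlying object in $\M$.

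With this identification in hand, for $(1)\Rightarrow(2)$ I would verify that the left-induced model structure $\coalgklcm$ satisfies the defining universal property of left Bousfield localization of $\coalgkm$ at $\C'$: it has the correct cofibrations, it inverts $\C'$ (since $U$ sends $\C'$ into the weak equivalences of $\lcm$), and the $U$-detection of weak equivalences identifies its fibrant objects with the $\C'$-locally fibrant objects of $\coalgkm$; uniqueness of Bousfield localizations then yields the existence of $\Lcprime\coalgkm$. For $(2)\Rightarrow(1)$, given $\Lcprime\coalgkm$, I would verify the hypotheses for left-induction across $U : \coalgkm \to \lcm$: the prospective cofibrations and weak equivalences agree with those already present in $\Lcprime\coalgkm$ by the identification above, and the required functorial factorizations and lifting properties transfer for free from the existing Bousfield-localized structure. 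The final equality $\coalgklcm = \Lcprime\coalgkm$ as model categories then follows automatically from agreement of cofibrations and weak equivalences.

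The main obstacle I anticipate is the precise identification of $\C'$-local equivalences in $\coalgkm$ with the $U$-preimages of $\C$-local equivalences in $\M$. This rests squarely on the definition of $\C'$ in Assumption \ref{standing.assumptions} together with careful use of the comonadic adjunction $U \dashv R$, in order that the $\C'$-local objects in $\coalgkm$ correspond exactly to coalgebras whose underlying object in $\M$ is $\C$-local. Once that correspondence of local objects is established, the agreement of local equivalences is a formal consequence of their definition via mapping spaces into local objects, and the remainder of the argument is standard manipulation of transferred model structures and the uniqueness of left Bousfield localizations with prescribed cofibrations and maps to invert.
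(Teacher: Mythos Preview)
Your overall strategy---reduce to the common underlying category, match cofibrations, and then identify the two classes of weak equivalences---is exactly the paper's approach. The paper dispatches the final step by citing that a model structure is determined by its cofibrations and weak equivalences, so once the identification is made the equivalence of (1) and (2) and the equality of model categories are immediate.

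There is, however, a genuine gap in your identification of the weak equivalences. Your ``reverse inclusion'' rests on the claim that \emph{every $\C'$-local object in $\coalgkm$ has $\C$-local underlying object in $\M$}. This is not part of the setup and is not obviously true: in a left-induced model structure the left adjoint $U$ preserves cofibrations, not fibrations, so there is no reason $U$ should carry fibrant objects of $\Lcprime\coalgkm$ to fibrant objects of $\lcm$. In fact, preservation of fibrant objects by $U$ is precisely the extra hypothesis appearing later in the paper (Theorem~\ref{comonad.preservation.eq}(1)) as one of several equivalent \emph{additional} conditions, not something available for free here. So as written, your reverse inclusion does not go through.

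The fix is that \emph{both} inclusions follow from the single observation you already made for the forward direction: for each $\C$-local $W \in \M$, the cofree coalgebra $KW$ is fibrant in $\coalgkm$ (as $K$ is right Quillen) and is $\C'$-local by the very definition of $\C'$. Combined with a derived form of the adjunction one gets
\[
\mapcoalgkm(f, KW) \;\simeq\; \mapm(Uf, W),
\]
and this yields directly that $f \in \C'$ iff $Uf$ is a $\C$-local equivalence. One then checks separately (and easily) that $\C'$ coincides with its own class of local equivalences: every $f \in \C'$ is trivially a $\C'$-local equivalence, and conversely any $\C'$-local equivalence is tested against the $KW$'s (which are $\C'$-local), hence lies in $\C'$. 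This is the content of the paper's Lemma~\ref{cprime-local-eq}. Note also that the displayed equivalence of mapping spaces is not the raw $\hom$-adjunction but a statement about homotopy function complexes; the paper handles this by choosing a cosimplicial resolution of $f$ in $\coalgkm$ and observing that $U$ carries it to a cosimplicial resolution of $Uf$ in $\M$, which is a point your sketch elides.
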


An analogous result for a monad instead of a comonad is \cite{batanin-white} (3.5). For monads, we must use the right-induced model structure on algebras over a monad, where a morphism $f$ of algebras is a weak equivalence or fibration if and only if $U(f)$ is in $\M$. For a monad and \emph{right} Bousfield localization, an analogous result is \cite{white-yau4} (2.6). However, in practice the above result is applied differently from the analogous results in \cite{batanin-white,white-yau4}. The reason is that the right-induced model structures on categories of monadic algebras and the left-induced model structures on categories of comonadic coalgebras exist under very different circumstances. The following result, which will appear as Theorem \ref{coalgklcm-admissible}, provides an answer to question (B). Conditions under which $\coalgkm$ is locally presentable may be found in \cite{ching-riehl} (Prop. A.1).

\begin{theorem}
Suppose $\M$ is a combinatorial model category and $\lcm$ is a cofibrantly generated model category.  Suppose $\coalgkm$ is a locally presentable category that admits a left-induced model structure over $\M$. Then the category $\coalgklcm$ admits the left-induced model structure over $\lcm$.
\end{theorem}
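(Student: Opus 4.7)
My plan is to construct the candidate model structure on $\coalgklcm$ directly via Jeff Smith's recognition theorem for combinatorial model categories, then invoke Theorem \ref{clockwise.ccw} to identify the result with the left Bousfield localization $\Lcprime\coalgkm$. This avoids the complication that standard Bousfield localization existence theorems typically require left properness, which is not among our hypotheses.

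The first step is to check that $\coalgkm$ is itself combinatorial. It is locally presentable by hypothesis, and the left-induced model structure (which exists by hypothesis) is cofibrantly generated by the standard left-induction machinery in the locally presentable setting (in the spirit of Bayeh--Hess--Karpova--Kedziorek--Riehl--Shipley), with a small set of generating cofibrations descending from those of $\M$ through the left adjoint $U$, using accessibility of the relevant functors between locally presentable categories.

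Next, I would apply Smith's recognition theorem to the candidate structure on $\coalgklcm$. The candidate cofibrations are those of $\coalgkm$, which coincide with the $U$-reflection of the cofibrations of $\lcm$ since $\M$ and $\lcm$ share the same cofibrations. The candidate weak equivalences $W$ are the $U$-reflection of $\Welcm$. Accessibility and accessible embedding of $W$ follow from accessibility of $U$ between locally presentable categories and combinatoriality of $\lcm$; trivial fibrations of $\coalgkm$ lie in $W$ because their $U$-images are trivial fibrations in $\M$ and hence weak equivalences in $\lcm$; and two-out-of-three and closure under retracts are inherited from $\lcm$.

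The main obstacle is verifying that the intersection of $W$ with the cofibrations of $\coalgkm$ is closed under pushouts and transfinite composition. Here the fact that $U$ is a left adjoint—not merely a right adjoint as for monadic algebras—plays the crucial role: $U$ preserves colimits and pushouts, so a pushout in $\coalgkm$ of such a trivial cofibration maps under $U$ to a pushout in $\lcm$ of a trivial cofibration. In any model category, trivial cofibrations are closed under pushout, since they coincide with the left lifting property against all fibrations, a class always stable under pushout; the same reasoning handles transfinite composition. Smith's theorem then delivers the candidate model structure on $\coalgklcm$, which is by construction the left-induced model structure over $\lcm$, and by Theorem \ref{clockwise.ccw} it coincides with the left Bousfield localization $\Lcprime\coalgkm$.
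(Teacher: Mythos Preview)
Your approach via Smith's recognition theorem is correct in outline and does yield the result, but it differs substantially from the paper's proof, which is much shorter. The paper invokes Theorem~\ref{bayeh-hkrs} (the acyclicity criterion of \cite{bayeh,gkr,hkrs}) directly: since $\coalgkm$ admits the left-induced model structure over $\M$, the acyclicity condition $\left(\Uinv\Cofm\right)^{\boxslash}\subseteq\Uinv\Wem$ holds; then the chain
\[\left(\Uinv\Coflcm\right)^{\boxslash}=\left(\Uinv\Cofm\right)^{\boxslash}\subseteq\Uinv\Wem\subseteq\Uinv\Welcm\]
(using $\Coflcm=\Cofm$ and $\Wem\subseteq\Welcm$) gives acyclicity over $\lcm$, and Theorem~\ref{bayeh-hkrs} applies again in the other direction. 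Your route essentially unpacks what the acyclicity criterion packages: you verify Smith's conditions by hand, and your key observation---that $U$, being a left adjoint, preserves pushouts and transfinite compositions, so closure of candidate trivial cofibrations under these colimits follows from the corresponding closure in $\lcm$---is precisely the content hidden inside the acyclicity machinery. Your approach is more self-contained but considerably longer; the paper's buys a three-line proof by citing the transfer theorem as a black box, and in particular never needs to establish separately that $\coalgkm$ is combinatorial.

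One small correction: you write that trivial fibrations of $\coalgkm$ lie in $W$ ``because their $U$-images are trivial fibrations in $\M$.'' This is not justified---in a left-induced model structure $U$ need not send (trivial) fibrations to (trivial) fibrations, since fibrations are defined by a lifting property in $\coalgkm$ rather than created by $U$. The correct (and simpler) reason is that trivial fibrations in $\coalgkm$ are in particular weak equivalences there, so by definition of the left-induced structure their $U$-images lie in $\Wem\subseteq\Welcm$; this is enough for the $I$-inj~$\subseteq W$ hypothesis of Smith's theorem.
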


Next we consider question (C) above. Monad-algebra structures are not in general preserved by localizations (\cite{white-localization} provides counterexamples). However, preservation of monad-algebra structures can happen under suitable conditions \cite{gutierrez-white-equivariant, hovey-white, white-localization,white-yau,white-yau3,white-yau6}. The following preservation result, which will appear as Theorem \ref{comonad.preservation.eq}, provides several equivalent characterizations of when a left Bousfield localization preserves coalgebras over a comonad.

\begin{theorem}
Suppose $\coalgkm$ (resp., $\coalgklcm$) admits the left-induced model structure over $\M$ (resp., $\lcm$).  Then the following statements are equivalent.
\begin{enumerate}
\item The forgetful functor $U : \coalgklcm \to \lcm$ preserves weak equivalences and fibrant objects.
\item $\LC$ preserves $K$-coalgebras (Def. \ref{def:preserves-coalgebra}).
\item $\LC$ lifts to the homotopy category of $K$-coalgebras (Def. \ref{crt-comonad}).
\item The forgetful functor preserves left Bousfield localization (Def. \ref{grso-comonad}).
\end{enumerate}
\end{theorem}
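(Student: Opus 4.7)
The plan is to establish the cycle $(1) \Rightarrow (2) \Rightarrow (3) \Rightarrow (4) \Rightarrow (1)$. The unifying framework is Theorem \ref{clockwise.ccw}, which gives the equality $\coalgklcm = L_{\C'}\coalgkm$ of model categories, so that the forgetful--cofree adjunction $U \dashv K$ automatically furnishes a Quillen adjunction between $\coalgklcm$ and $\lcm$. In particular, $U$ tautologically preserves weak equivalences (the weak equivalences of $\coalgklcm$ are, by left-induction, exactly those maps $f$ with $Uf \in \Welcm$), so the content of (1) really concerns the preservation of fibrant objects.

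For $(1) \Rightarrow (2)$, first take a $K$-coalgebra $X$ and a fibrant replacement $X \to \widetilde X$ in $\coalgklcm$. The underlying map $UX \to U\widetilde X$ is a cofibration in $\M$ and a weak equivalence in $\lcm$; by hypothesis $U\widetilde X$ is fibrant in $\lcm$, hence a model for $\LC UX$. The coalgebra structure on $\widetilde X$ therefore furnishes the lift required by Definition \ref{def:preserves-coalgebra}, so $\LC$ preserves $K$-coalgebras.

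The implications $(2) \Rightarrow (3)$ and $(3) \Rightarrow (4)$ should be largely formal. Given (2), the per-object coalgebra lifts assemble into an endofunctor on $\Ho(\coalgkm)$ via the universal property of the homotopy category together with the equality $\coalgklcm = L_{\C'}\coalgkm$; the compatibility with $U$ down to the corresponding $\LC$ on $\Ho(\M)$ is built into the construction, giving the lift of Definition \ref{crt-comonad}. Given (3), the lifted endofunctor exhibits $L_{\C'}\coalgkm = \coalgklcm$ as computing the Bousfield localization in a manner compatible with $U$, which is precisely the preservation of left Bousfield localization by the forgetful functor in Definition \ref{grso-comonad}.

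Finally, for $(4) \Rightarrow (1)$, preservation of left Bousfield localization by $U$ translates directly into $U$ sending fibrant $K$-coalgebras (in the left-induced structure over $\lcm$) to $\C$-local fibrant objects in $\M$, yielding the fibrant-object preservation needed for (1); preservation of weak equivalences is, as noted above, automatic. The principal obstacle is expected to be $(1) \Rightarrow (2)$, where one must verify that the coalgebra structure on the fibrant replacement $\widetilde X$ descends to a well-defined lift of the localization at the level of the homotopy category, independently of the choice of $\widetilde X$; this step should follow the pattern used in \cite{batanin-white} and \cite{white-yau3}, suitably dualized for the comonadic and left-localization setting.
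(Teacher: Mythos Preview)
Your overall strategy is sound and close to the paper's, but the implication structure differs and your step $(3)\Rightarrow(4)$ has a genuine gap. The paper does not argue in a single cycle; it proves $(1)\Rightarrow(2)\Rightarrow(3)\Rightarrow(1)$ and then separately $(1)\Rightarrow(4)$ and $(4)\Rightarrow(2)$. The essential content you are missing lies in $(3)\Rightarrow(1)$ (the paper's route), which you replace by $(3)\Rightarrow(4)$.

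The problem with your $(3)\Rightarrow(4)$ is that Definition~\ref{crt-comonad} only hands you an abstract augmented endofunctor $r:\Id\to L^K$ on $\Ho(\coalgkm)$ together with a natural isomorphism $h:\LC U\cong UL^K$. You assert that this ``exhibits $\Lcprime\coalgkm=\coalgklcm$ as computing the Bousfield localization in a manner compatible with $U$,'' but there is no a~priori reason the abstract $L^K$ agrees with the concrete fibrant replacement $\Lcprime$: nothing in Definition~\ref{crt-comonad} forces $L^K X$ to be $\C'$-local or $r_X$ to be a $\C'$-local equivalence. The paper closes this gap by invoking results of Casacuberta--Ravent\'os--Tonks \cite{crt}: first one checks (as in their Lemma~7.4) that $(L^K,r)$ is a localization on $\Ho(\coalgkm)$, and then their uniqueness Theorem~4.6 identifies $L^K$ with $\Lcprime$. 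Only after this identification does the isomorphism $h$ yield that $U$ sends $\C'$-local objects to $\C$-local objects. Without this uniqueness step, your $(3)\Rightarrow(4)$ is not justified.

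On the other hand, your $(4)\Rightarrow(1)$ is correct and slightly slicker than the paper's $(4)\Rightarrow(2)$: applying Definition~\ref{grso-comonad} to the identity map of a $\C'$-local object immediately shows its underlying object is $\C$-local, which is exactly preservation of fibrant objects. Your $(1)\Rightarrow(2)$ is essentially the paper's Theorem~\ref{left-comonad-preservation}, though you should note that to conclude $U\widetilde X\simeq \LC UX$ in $\M$ (not merely in $\lcm$) one uses that a $\C$-local equivalence between $\C$-local objects is an $\M$-weak equivalence; the paper spells this out via an explicit comparison with $\RC Q UX$.
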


An analogous result for a monad instead of a comonad is \cite{batanin-white} (5.6), using the right-induced model structure on algebras. An analogue for a monad and right Bousfield localization is \cite{white-yau4} (5.4). Once again, in practice the implementation of the previous result is quite different from those in \cite{batanin-white,white-yau4}.  

The above theorems are proved in Sections \ref{sec:lifting-bousfield}--\ref{sec:equivalent-approach}.  The second half of this paper contains many applications of these theorems to various categories and left Bousfield localizations of interest.  In Section \ref{sec:chain-applications} we prove preservation results under homological truncations for comonoids and (coring) comodules in chain complexes over a commutative unital ring equipped with the injective model structure.  In Section \ref{sec:spectra-applications} we prove preservation results under smashing localizations for comodules over a comonoid in (localized) spectra with the injective model structure.  In Section \ref{sec:application-stmod} we prove preservation results under smashing localizations for comodules, comonoids, and coalgebras over suitable cooperads in the stable module category. In Section \ref{sec:simplicial-applications}, we provide preservation results in simplicial model categories, simplicial coalgebras, and simplicial presheaves.

\section*{Acknowledgments} The first author is grateful to Emily Riehl, Kathryn Hess, Brooke Shipley, Niles Johnson, Michael Ching, Andrew Salch, Bob Bruner, Pavel Safronov, and Paul Goerss for helpful conversations regarding the applications of our main theorems. We also thank Carles Casacuberta and the anonymous referee for many helpful comments.

\section{Lifting Bousfield Localization to Comonadic Algebras}
\label{sec:lifting-bousfield}

We assume the reader is familiar with the basics of model categories, as explained in \cite{hirschhorn,hovey}.

Suppose $K$ is a comonad on a model category $\M$ that admits a left Bousfield localization $\lcm$ with respect to some class of cofibrations $\C$ such that the category of $K$-coalgebras in $\M$ admits a left-induced model structure via the forgetful functor to $\M$.  The main result of this section (Theorem \ref{clockwise.ccw}) says that the category of $K$-coalgebras in $\lcm$ admits a left-induced model structure via the forgetful functor to $\lcm$ if and only if the category of $K$-coalgebras in $\M$ admits a suitable left Bousfield localization.  Furthermore, when either condition holds, the two model categories coincide.

We begin by recalling some definitions regarding left Bousfield localization and left-induced model structures.  The homotopy function complex in a model category $\M$ is denoted by $\mapm$  \cite{hirschhorn} (Def. 17.4.1).

\begin{definition}\label{def:lcm}
Suppose $\M$ is a model category, and $\C$ is a class of morphisms in $\M$.
\begin{enumerate}
\item A \emph{$\C$-local object} is a fibrant object $X \in \M$ such that the induced morphism
\[\nicexy@C+1cm{\mapm(A,X) & \mapm(B,X) \ar[l]_-{\mapm(f,X)}}\]
of simplicial sets is a weak equivalence for all the morphisms $f : A \to B$ in $\C$.
\item A \emph{$\C$-local equivalence} is a morphism $f: A \to B$ in $\M$ such that the induced morphism
\[\nicexy@C+1cm{\mapm(A,X) & \mapm(B,X) \ar[l]_-{\mapm(f,X)}}\]
of simplicial sets is a weak equivalence for all $\C$-local objects $X$.
\item Define a new category $\lcm$ as being the same as $\M$ as a category, together with the following distinguished classes of morphisms. A morphism $f$ in $\lcm$ is a:
\begin{itemize}
\item \emph{cofibration} if it is a cofibration in $\M$.
\item \emph{weak equivalence} if it is a $\C$-local equivalence.
\item \emph{fibration} if it has the right lifting property with respect to trivial cofibrations, i.e., morphisms that are both cofibrations and weak equivalences.
\end{itemize}
\item If $\lcm$ is a model category with these weak equivalences, cofibrations, and fibrations, then it is called the \emph{left Bousfield localization of $\M$ with respect to $\C$} \cite{hirschhorn} (Def. 3.3.1(1)), and the $\C$-local objects are precisely the fibrant objects in $\lcm$ \cite{hirschhorn} (Prop. 3.4.1). This model structure exists when $\C$ is a set of morphisms and $\M$ is left proper and either cellular or combinatorial, by \cite{hirschhorn} (Thm. 4.1.1(2)) or \cite{barwick} (Thm. 4.7).
\end{enumerate}
\end{definition}

\begin{remark} \label{remark:cof-between-cof-obj}
In the previous definition, using functorial cofibrant replacement and functorial factorization of morphisms into cofibrations followed by trivial fibrations, without loss of generality we may assume that morphisms in $\C$ are cofibrations between cofibrant objects. Also, because $\lcm$ has the same cofibrations as $\M$, if $\M$ has all objects cofibrant then so does $\lcm$. Furthermore, 
if $\M$ is left proper, simplicial, combinatorial, or cellular, then $\lcm$ is the same \cite{lurie} (Prop.\ A.3.7.3).
\end{remark}

When an adjunction is drawn, the left adjoint will always be drawn on top.

\begin{definition}[\cite{bayeh,gkr,hkrs}]
Suppose $L : \N \adjoint \M : R$ is an adjunction with left adjoint $L$ and $\M$ is a model category.  We say that $\N$ admits the \emph{left-induced model structure} via $L$ if it admits the model category structure in which a morphism $f$ is a weak equivalence (resp., cofibration) if and only if $Lf$ is a weak equivalence (resp., cofibration) in $\M$.
\end{definition}

\begin{remark}
In the previous definition, when $\N$ admits the left-induced model structure via the left adjoint $L$, the adjoint pair $(L,R)$ is a Quillen adjunction because $L$ preserves cofibrations and trivial cofibrations.
\end{remark}

\begin{definition}
\begin{enumerate}
\item A \emph{comonad} $(K,\delta,\eta)$ on a category $\M$ \cite{maclane} (VI.1) consists of a functor $K : \M \to \M$ and natural transformations $\delta : K \to K^2$ and $\eta : K \to \Id$ such that the following coassociativity and counity diagrams commute:
\[\nicexy{K \ar[d]_-{\delta} \ar[r]^-{\delta} & K^2 \ar[d]^-{K\delta}\\
K^2 \ar[r]^-{\delta K} & K^3}\qquad 
\nicexy{K & K^2 \ar[l]_-{\eta K} \ar[r]^-{K\eta} & K\\
& K\ar[ul]^-{=} \ar[u]_-{\delta} \ar[ur]_-{=}}\]
\item A \emph{$K$-coalgebra} in $\M$ is a pair $(X,\lambda)$ consisting of an object $X \in \M$ and a morphism $\lambda : X \to KX$ such that the following coassociativity and counity diagrams commute:
\[\nicexy{X \ar[d]_-{\lambda} \ar[r]^-{\lambda} & KX \ar[d]^-{K\lambda}\\
KX \ar[r]^-{\delta_X} & K^2X}\qquad
\nicexy{X \ar[dr]_-{=} \ar[r]^-{\lambda} & KX \ar[d]^-{\eta_X}\\ & X}\]
A morphism of $K$-coalgebras $(X,\lambda) \to (Y,\rho)$ is a morphism $f : X \to Y$ in $\M$ compatible with the structure morphisms $\lambda$ and $\rho$ in the sense that the diagram
\[\nicexy{X \ar[d]_-{\lambda} \ar[r]^-{f} & Y \ar[d]^-{\rho}\\ KX \ar[r]^-{Kf} & KY}\]
is commutative.
\item With $(K,\delta,\eta)$ abbreviated to $K$, the category of $K$-coalgebras is denoted by $\coalgkm$.  The corresponding forgetful-cofree adjunction is denoted by
\begin{equation}\label{forgetful-cofree}
\nicexy{\coalgkm \ar@<2pt>[r]^-{U} & \M \ar@<2pt>[l]^-{K}}.
\end{equation}
In this adjunction, the forgetful functor $U$ is the left adjoint, and the cofree $K$-coalgebra functor is the right adjoint.
\end{enumerate}
\end{definition}

\begin{assumption}\label{standing.assumptions}
Suppose that:
\begin{enumerate}
\item $\M$ is a model category, and $\C$ is a class of cofibrations between cofibrant objects in $\M$ such that the left Bousfield localization $\lcm$ exists.  
\item $K$ is a comonad on $\M$ such that the category $\coalgkm$ admits the left-induced model structure via the forgetful functor $U$ to $\M$. 
\item $\C'$ is the class of morphisms $f : A \to B$ in $\coalgkm$  such that the induced morphism
\begin{equation}\label{cprime.def}
\nicexy@C+2cm{\mapcoalgkm(A,KX) & \mapcoalgkm(B,KX) \ar[l]_-{\mapcoalgkm(f,KX)}}
\end{equation}
of simplicial sets is a weak equivalence for all $\C$-local objects $X$ in $\M$, where $KX$ is the cofree $K$-coalgebra of $X$ \eqref{forgetful-cofree}.
\end{enumerate}
\end{assumption}

Many examples of the left-induced model structure on $\coalgkm$ are given in \cite{bayeh}, \cite{wald}, and \cite{hkrs}, some of which will be used below.

\begin{lemma}\label{cprime-local-eq}
Under Assumption \ref{standing.assumptions}, the following statements are equivalent for a morphism $f$ in $\coalgkm$.
\begin{enumerate}
\item $f \in \C'$.
\item $f$ is a $\C'$-local equivalence.
\item $Uf$ in $\M$ is a $\C$-local equivalence.
\end{enumerate}
\end{lemma}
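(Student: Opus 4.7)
The plan is to prove the cycle $(1) \Rightarrow (2) \Rightarrow (1)$ together with the separate equivalence $(1) \Leftrightarrow (3)$.

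The implication $(1) \Rightarrow (2)$ is essentially definitional: if $f \in \C'$ and $Y \in \coalgkm$ is any $\C'$-local object, then by the very definition of $\C'$-local object the map $\mapcoalgkm(f, Y)$ is a weak equivalence, because $f$ belongs to the class $\C'$ that $Y$ is local against. Hence $f$ is a $\C'$-local equivalence.

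For the converse $(2) \Rightarrow (1)$, the key observation is that for every $\C$-local object $X \in \M$, the cofree $K$-coalgebra $KX$ is itself a $\C'$-local object of $\coalgkm$. Fibrancy of $KX$ in $\coalgkm$ follows from Assumption \ref{standing.assumptions}: the left-induced model structure makes $U \dashv K$ a Quillen adjunction, so the right adjoint $K$ carries $\M$-fibrant objects (and $X$ is $\M$-fibrant by definition of $\C$-locality) to fibrant $K$-coalgebras. The locality condition that $\mapcoalgkm(g, KX)$ is a weak equivalence for every $g \in \C'$ is precisely the defining condition \eqref{cprime.def} of the class $\C'$. With $KX$ confirmed to be $\C'$-local for every $\C$-local $X$, any $\C'$-local equivalence $f$ must satisfy $\mapcoalgkm(f, KX) \in \Wem$ for all $\C$-local $X \in \M$, which is exactly the statement $f \in \C'$.

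For $(1) \Leftrightarrow (3)$, I will invoke the derived adjunction $U \dashv K$ at the level of homotopy function complexes. Since $\coalgkm$ carries the left-induced model structure via $U$, the functor $U$ is a left Quillen functor, so $U$ preserves cofibrant objects and $K$ preserves fibrant objects. Standard Quillen-adjunction theory (cf.\ \cite{hirschhorn} Ch.\ 17) then produces, for any $A \in \coalgkm$ cofibrant and any $X \in \M$ fibrant, a natural weak equivalence of homotopy function complexes
\[
\mapcoalgkm(A, KX) \simeq \mapm(UA, X).
\]
Replacing $f$ by a cofibrant replacement in $\coalgkm$ (which preserves both sides up to weak equivalence), the map induced by $f$ on the left corresponds under this equivalence to the map induced by $Uf$ on the right. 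Quantifying over all $\C$-local $X \in \M$, the condition $f \in \C'$ (i.e.\ $\mapcoalgkm(f,KX) \in \Wem$ for all such $X$) is equivalent to the condition that $\mapm(Uf, X) \in \Wem$ for all such $X$, which is precisely the condition that $Uf$ be a $\C$-local equivalence.

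The one technical point requiring care is the derived-adjunction comparison of homotopy function complexes, but this is a standard consequence of having a Quillen adjunction and of $\C$-local objects being fibrant in $\M$; nothing else in the argument rises above the level of unwinding definitions.
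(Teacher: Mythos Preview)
Your argument is correct and follows essentially the same line as the paper's proof: the equivalence $(1)\Leftrightarrow(2)$ is handled identically (showing each $KX$ is $\C'$-local because $K$ is right Quillen and the $\C'$-condition is built in), and for $(1)\Leftrightarrow(3)$ you invoke the derived adjunction on homotopy function complexes where the paper instead writes out the same fact explicitly by observing that $U$ carries a cosimplicial resolution of $f$ in $\coalgkm$ to a cosimplicial resolution of $Uf$ in $\M$ (since $U$ creates cofibrations and weak equivalences in the left-induced structure) and then applies the ordinary $(U,K)$-adjunction levelwise. The content is the same; the paper is simply more explicit about why the Quillen adjunction yields the comparison $\mapcoalgkm(A,KX)\simeq\mapm(UA,X)$.
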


\begin{proof}
First we show that (1) and (2) are equivalent.  By the definition of $\C'$-local objects, every morphism in $\C'$ is a $\C'$-local equivalence.  Conversely, note that for each $\C$-local object $X$, the cofree $K$-coalgebra $KX$ is a $\C'$-local object.  Indeed, since the forgetful-cofree adjunction \eqref{forgetful-cofree} is a Quillen adjunction and since $X \in \M$ is fibrant, the object $KX \in \coalgkm$ is fibrant. So $KX$ is a $\C'$-local object by the definition of the class $\C'$.  It follows that every $\C'$-local equivalence $f$ induces a weak equivalence of simplicial sets in \eqref{cprime.def}, and therefore $f$ belongs to $\C'$.  

Next we show that (1) and (3) are equivalent.  Suppose $f^{\bullet} : A^{\bullet} \to B^{\bullet}$ is a cosimplicial resolution of $f$ in $\coalgkm$ \cite{hirschhorn} (Def. 16.1.2(1) and Prop. 16.1.22(1)).  Applying the forgetful functor $U$ entrywise to $f^{\bullet}$ yields a cosimplicial resolution of $Uf$ in $\M$ because cofibrations and weak equivalences in $\coalgkm$ are defined in $\M$ via $U$.

For any $\C$-local object $X$ in $\M$, as noted above, the cofree $K$-coalgebra $KX$ is a fibrant object in $\coalgkm$.  Using the forgetful-cofree adjunction \eqref{forgetful-cofree}, there is a commutative diagram of simplicial sets:
\[\nicexy@C+2cm{\mapm(UA,X)  \ar@{=}[d] & \mapm(UB,X) \ar[l]_-{\mapm(Uf,X)} \ar@{=}[d]\\
\M\bigl(UA^{\bullet}, X\bigr) \ar[d]_-{\cong} & \M\bigl(UB^{\bullet},X\bigr) \ar[l]_-{\M(Uf^{\bullet},X)} \ar[d]_-{\cong}\\
\coalgkm\bigl(A^{\bullet}, KX\bigr) \ar@{=}[d] & \coalgkm\bigl(B^{\bullet},KX\bigr) \ar[l]_-{\coalgkm(f^{\bullet},KX)} \ar@{=}[d]\\
\mapcoalgkm(A,KX) & \mapcoalgkm(B,KX) \ar[l]_-{\mapcoalgkm(f,KX)}}\]
By definition $f \in \C'$ if and only if the bottom horizontal morphism in the above diagram is a weak equivalence for all $\C$-local objects $X$.  By commutativity and the 2-out-of-3 property, this is equivalent to the top horizontal morphism being a weak equivalence for all $\C$-local objects $X$.  This in turn is equivalent to $Uf$ in $\M$ being a $\C$-local equivalence.
\end{proof}

\begin{theorem}\label{clockwise.ccw}
Under Assumption \ref{standing.assumptions}, the following two statements are equivalent.
\begin{enumerate}
\item $\coalgklcm$ admits the left-induced model structure via the forgetful functor $U : \coalgklcm \to \lcm$.
\item The left Bousfield localization $\Lcprime \coalgkm$ exists.
\end{enumerate}
Furthermore, if either statement is true, then there is an equality
\[\coalgklcm = \Lcprime \coalgkm\]
of model categories.
\end{theorem}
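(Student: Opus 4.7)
The plan is to observe that both statements describe candidate model structures on the same underlying category $\coalgkm$, and then to use Lemma \ref{cprime-local-eq} to identify their cofibrations and weak equivalences. Once these two classes are shown to coincide, the equality claim follows automatically, since any model structure is determined by its cofibrations and weak equivalences (the fibrations are forced as those maps with right lifting against the common class of trivial cofibrations). The entire theorem thus reduces to checking that the candidate cofibrations and weak equivalences in (1) and in (2) match on the nose.

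For the cofibrations: the left-induced structure on $\coalgklcm$ would have $f$ as a cofibration iff $Uf$ is a cofibration in $\lcm$, which, by the definition of left Bousfield localization, is the same as $Uf$ being a cofibration in $\M$; but this is precisely the cofibration condition in the already-existing left-induced structure on $\coalgkm$, which in turn is the cofibration condition in $\Lcprime \coalgkm$. For the weak equivalences: the left-induced structure on $\coalgklcm$ would have $f$ as a weak equivalence iff $Uf$ is a $\C$-local equivalence in $\M$, and by Lemma \ref{cprime-local-eq} this is equivalent to $f$ being a $\C'$-local equivalence, which is exactly the class of weak equivalences in the left Bousfield localization $\Lcprime \coalgkm$.

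Given these identifications, the implication (1) $\Rightarrow$ (2) follows because the left-induced model structure on $\coalgklcm$, viewed as a model structure on the underlying category $\coalgkm$, supplies exactly the weak equivalences, cofibrations, and fibrations demanded of $\Lcprime \coalgkm$ by Definition \ref{def:lcm}. The converse (2) $\Rightarrow$ (1) is symmetric: the left Bousfield localization $\Lcprime \coalgkm$, viewed as a model structure on $\coalgkm$, has precisely the cofibrations and weak equivalences that $U$ would left-induce from $\lcm$, so it supplies the required model structure on $\coalgklcm$ and verifies (1). In either direction the induced equality $\coalgklcm = \Lcprime \coalgkm$ is then read off from the coincidence of cofibrations and weak equivalences.

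There is no real obstacle here because Lemma \ref{cprime-local-eq} has already absorbed the main technical content, namely translating between the simplicially-formulated class $\C'$ and the pull-back-along-$U$ description of $\C$-local equivalences. The one subtlety worth emphasising is that we never need to verify the classes of \emph{fibrations} agree separately: fibrations are uniquely determined by cofibrations and weak equivalences, and in both structures the trivial cofibrations coincide by the matching above. The proof therefore consists essentially of bookkeeping translations between three equivalent descriptions (left-induced, left-Bousfield-localized, and $U$-pulled-back) of the same pair of classes.
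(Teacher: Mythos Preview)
Your proof is correct and follows essentially the same approach as the paper's: both argue that the two candidate model structures live on the same underlying category and have identical classes of cofibrations and weak equivalences, invoking Lemma \ref{cprime-local-eq} for the latter. The paper is simply more terse (citing \cite{hirschhorn} Prop.~7.2.7 for the determination by cofibrations and weak equivalences), but the content is the same.
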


This theorem says that, in the diagram
\[\nicexy@C+1cm{
\M \ar@{|->}[d]_-{\text{left-induced}} \ar@{|->}[r]^-{\LC} & \lcm \ar@{|.>}[r]^-{\text{left-induced}}_-{\text{exists ?}} & \coalgklcm \ar@{=}[d]^-{?}\\
\coalgkm \ar@{|.>}[rr]^-{\text{$\Lcprime$ exists ?}} && \Lcprime\coalgkm}\]
the ability to go counter-clockwise is equivalent to the ability to go clockwise.  Furthermore, when either one is possible, the results are equal.

\begin{proof}
The categories $\coalgklcm$ and $\Lcprime\coalgkm$ are both equal to the category $\coalgkm$.  To show the model structures coincide, it suffices to show that they have the same classes of cofibrations and also the same classes of weak equivalences.  In each of these two categories, a cofibration is a morphism $f : A \to B$ in $\coalgkm$ such that $Uf$ in $\M$ is a cofibration.  Moreover, the equivalence of (2) and (3) in Lemma \ref{cprime-local-eq} says that these two categories have the same weak equivalences.
\end{proof}

\section{Admissibility of Comonadic Coalgebras in Bousfield Localization}
\label{sec:admissibility-coalgebra}

In order to apply Theorem \ref{clockwise.ccw}, we will need to know when condition (1) or condition (2) there holds.  In the main result of this section (Theorem \ref{coalgklcm-admissible}), we prove that under mild conditions the category $\coalgklcm$ admits a left-induced model structure via the forgetful functor to the left Bousfield localization $\lcm$.

\begin{notation}
Suppose $\M$ is a category, $\C$ is a class of morphisms in $\M$, and $f : A \to B$ and $g : C \to D$ are morphisms in $\M$.
\begin{enumerate}
\item We say that $f$ has the \emph{left lifting property} with respect to $g$ if every solid-arrow commutative diagram
\[\nicexy{A \ar[d]_-{f} \ar[r] & C \ar[d]^-{g}\\ B \ar[r] \ar@{.>}[ur] & D}\]
in $\M$ admits a dotted arrow, called a \emph{lift}, that makes the entire diagram commutative.  In this case, we also say that $g$ has the \emph{right lifting property} with respect to $f$ and write $f \boxslash g$.
\item Define $\C^{\boxslash}$ to be the class of morphisms $h$ in $\M$ such that $c \boxslash h$ for all $c$ in $\C$.
\end{enumerate}
\end{notation}

Next we provide reasonable conditions under which the two equivalent statements in Theorem \ref{clockwise.ccw} are true.  We will make use of the following right-to-left transfer principle from \cite{bayeh} (Theorem 2.23), \cite{gkr} (Corollary 2.7), and \cite{hkrs} (Corollary 3.3.4(2)).  Note that for a locally presentable category, the weaker notion of \emph{cofibrantly generated} in \cite{bayeh} (Def. 2.4) for a pair of sets of generating morphisms coincides with the usual one \cite{hovey} (Def. 2.1.17).  Recall that a \textit{combinatorial model category} is a cofibrantly generated model category whose underlying category is locally presentable \cite{adamek-rosicky} (Def. 1.17). The following theorem is from \cite{bayeh,gkr,hkrs}.

\begin{theorem}\label{bayeh-hkrs}
Suppose $U : \N \adjoint \M : F$ is an adjoint pair with left adjoint $U$, $\N$ a locally presentable category, and $\M$ a combinatorial model category.  Then the following two statements are equivalent.
\begin{enumerate}
\item There is an inclusion
\begin{equation}\label{acyclicity}
\left(\Uinv \Cofm \right)^{\boxslash} \subseteq \Uinv \Wem
\end{equation}
called the acyclicity condition, where $\Cofm$ and $\Wem$ are the classes of cofibrations and of weak equivalences in $\M$.
\item $\N$ admits the left-induced model structure via $U$.
\end{enumerate}
\end{theorem}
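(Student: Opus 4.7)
The plan is to handle the two implications separately, with (2)$\Rightarrow$(1) being essentially formal and (1)$\Rightarrow$(2) being the substantive content that requires a recognition theorem for cofibrantly generated model structures together with accessibility arguments in locally presentable categories.

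For the direction (2)$\Rightarrow$(1), I would argue directly. Assume $\N$ admits the left-induced model structure via $U$. Then by definition the cofibrations of $\N$ are exactly the class $\Uinv \Cofm$, so any $h \in (\Uinv \Cofm)^{\boxslash}$ has the right lifting property against every cofibration in $\N$. By the standard weak factorization system axioms of a model category, such an $h$ is a trivial fibration in $\N$, hence in particular a weak equivalence in $\N$, meaning $Uh \in \Wem$, and so $h \in \Uinv \Wem$, as required.

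For (1)$\Rightarrow$(2), my plan is to apply Jeff Smith's recognition theorem with candidate weak equivalences $\mathcal{W} := \Uinv \Wem$ and candidate cofibrations $\Uinv \Cofm$. First I would verify that $\mathcal{W}$ satisfies the two-out-of-three property, is closed under retracts, and is accessible as a full subcategory of the arrow category of $\N$; the first two properties transfer immediately from $\Wem$ through $U$, and accessibility follows from $\M$ being combinatorial (so $\Wem$ is accessible) combined with $U$ preserving filtered colimits as a left adjoint between locally presentable categories. The decisive step is to produce a small set $I \subseteq \Uinv \Cofm$ whose weak saturation (under cobase change, transfinite composition, and retracts) equals $\Uinv \Cofm$. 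Granting such a set $I$, the acyclicity hypothesis translates into $I^{\boxslash} \subseteq \mathcal{W}$, which is one of the two principal inputs to Smith's theorem; the other, stability of $\mathcal{W} \cap \mathrm{cof}(I)$ under pushouts and transfinite compositions, follows from the accessibility of $\mathcal{W}$ together with two-out-of-three.

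The main obstacle is producing the generating set $I$. Unlike the right-transfer setting, where generators are obtained by applying the left adjoint to the generators of the base, here $U$ is already the left adjoint and its right adjoint $F$ does not in general create cofibrations, so no direct construction is available. My approach would be to invoke the \emph{Makkai-Rosick\'y theorem} that every accessible weak factorization system on a locally presentable category is cofibrantly generated by a small set. The class $\Uinv \Cofm$ is accessible, being the preimage of an accessible class under a colimit-preserving functor between locally presentable categories, and the acyclicity hypothesis supplies the compatible right class of trivial fibrations; together these yield a weak factorization system on $\N$ to which the Makkai-Rosick\'y result applies and produces $I$. Carefully verifying these accessibility and lifting conditions is where local presentability of $\N$ is indispensable, and this is the technical heart of the argument.
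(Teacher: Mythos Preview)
The paper does not prove this theorem; it is quoted verbatim as a result from \cite{bayeh} (Theorem 2.23), \cite{gkr} (Corollary 2.7), and \cite{hkrs} (Corollary 3.3.4(2)), with no argument given. So there is no ``paper's own proof'' to compare your proposal against.

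That said, your sketch has a genuine gap relative to the arguments in the cited sources. In your (1)$\Rightarrow$(2) direction you write that the acyclicity hypothesis ``supplies the compatible right class of trivial fibrations; together these yield a weak factorization system on $\N$ to which the Makkai--Rosick\'y result applies.'' But the acyclicity condition only tells you that $(\Uinv\Cofm)^{\boxslash}\subseteq \Uinv\Wem$; it does not by itself produce \emph{factorizations} of maps in $\N$ into a map in $\Uinv\Cofm$ followed by a map in $(\Uinv\Cofm)^{\boxslash}$. Without factorizations you do not yet have a weak factorization system, so the Makkai--Rosick\'y statement (that an accessible wfs on a locally presentable category is cofibrantly generated) does not apply, and you cannot extract the generating set $I$ that your appeal to Smith's theorem requires. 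This is precisely the difficulty that distinguishes left transfer from right transfer.

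The actual proofs in \cite{hkrs} and (correcting an error there) \cite{gkr} proceed differently: they show directly that an accessible weak factorization system on $\M$ \emph{lifts} along the right adjoint $F$ to an accessible weak factorization system on $\N$, using Garner's algebraic small object argument. This produces both the factorizations and the generating sets in one stroke, for each of the two weak factorization systems separately; the acyclicity condition is then exactly what is needed to verify that the two lifted wfs assemble into a model structure. Your outline could be repaired by invoking this lifting result in place of the Makkai--Rosick\'y step, but as written the existence of factorizations is assumed rather than established.
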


The following observation says that under mild conditions, if $\coalgkm$ admits the left-induced model structure over $\M$, then $\coalgklcm$ admits the left-induced model structure over $\lcm$. The third condition below is not overly restrictive, as can be seen in \cite{ching-riehl} (Prop. A.1). An analogue of (1) below was proven in \cite{wald} (Prop. A.6). Thanks to Remark \ref{remark:cof-between-cof-obj} the second assumption is no obstacle.

\begin{theorem}\label{coalgklcm-admissible}
Suppose:
\begin{itemize}
\item $\M$ is a combinatorial model category.
\item $\C$ is a class of cofibrations between cofibrant objects in $\M$ such that $\lcm$ is a cofibrantly generated model category.
\item $K$ is a comonad on $\M$ such that $\coalgkm$ is a locally presentable category that admits the left-induced model structure via the forgetful functor $U$ \eqref{forgetful-cofree}.
\end{itemize}
Then the following two statements hold.
\begin{enumerate}
\item The category $\coalgklcm$ admits the left-induced model structure via the forgetful functor $U$ in the forgetful-cofree adjunction
\begin{equation}\label{forgetful-cofree-lcm}
\nicexy{\coalgklcm \ar@<2pt>[r]^-{U} & \lcm \ar@<2pt>[l]^-{K}}.
\end{equation}
\item With $\C'$ as in Assumption \ref{standing.assumptions}, the left Bousfield localization $\Lcprime \coalgkm$ exists and is equal to the model category $\coalgklcm$.
\end{enumerate}
\end{theorem}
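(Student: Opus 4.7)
The plan is to reduce both conclusions to results already available: for part (1) the right-to-left transfer principle of Theorem \ref{bayeh-hkrs}, and for part (2) the equivalence established in Theorem \ref{clockwise.ccw}.

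For part (1), I would first observe that $\lcm$ satisfies the hypotheses of Theorem \ref{bayeh-hkrs}: it is cofibrantly generated by assumption, and its underlying category is literally $\M$, which is locally presentable, so $\lcm$ is combinatorial. Similarly, $\coalgklcm$ has the same underlying category as $\coalgkm$ and is therefore locally presentable. The task then reduces to verifying the acyclicity condition
\[
\left(\Uinv \Coflcm\right)^{\boxslash} \subseteq \Uinv \Welcm.
\]
The key observation is that cofibrations in $\lcm$ are by definition the cofibrations in $\M$, so $\Uinv \Coflcm = \Uinv \Cofm$ as classes of maps in $\coalgkm$, and hence also $\left(\Uinv \Coflcm\right)^{\boxslash} = \left(\Uinv \Cofm\right)^{\boxslash}$. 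Applying Theorem \ref{bayeh-hkrs} to the standing hypothesis that $\coalgkm$ admits the left-induced model structure over $\M$ yields $\left(\Uinv \Cofm\right)^{\boxslash} \subseteq \Uinv \Wem$. Since every weak equivalence in $\M$ is a $\C$-local equivalence, one has $\Wem \subseteq \Welcm$, whence $\Uinv \Wem \subseteq \Uinv \Welcm$. Chaining these inclusions produces the required acyclicity condition, and Theorem \ref{bayeh-hkrs} then furnishes the left-induced model structure on $\coalgklcm$.

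For part (2), once part (1) is in hand, the hypotheses of Assumption \ref{standing.assumptions} are satisfied (with $\C'$ defined as in \eqref{cprime.def}), and condition (1) of Theorem \ref{clockwise.ccw} holds. That theorem then simultaneously yields the existence of the left Bousfield localization $\Lcprime \coalgkm$ and the equality $\coalgklcm = \Lcprime \coalgkm$ of model categories. The main obstacle here is not conceptual but bookkeeping: one must keep careful track of which notions of cofibration and weak equivalence are being used, exploiting the defining properties of left Bousfield localization (same cofibrations, possibly more weak equivalences) to chain the inclusions correctly and to confirm that the combinatoriality hypotheses of Theorem \ref{bayeh-hkrs} transfer from $\M$ to $\lcm$.
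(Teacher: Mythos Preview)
Your proposal is correct and follows essentially the same approach as the paper: verify the acyclicity condition for the localized adjunction via the chain $\left(\Uinv \Coflcm\right)^{\boxslash} = \left(\Uinv \Cofm\right)^{\boxslash} \subseteq \Uinv \Wem \subseteq \Uinv \Welcm$, then invoke Theorem~\ref{clockwise.ccw} for part~(2). The paper's proof is nearly identical, differing only in presentation.
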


\begin{proof}
For the first assertion, by Theorem \ref{bayeh-hkrs} with $\N = \coalgkm$, the acyclicity condition \eqref{acyclicity} holds for the forgetful-cofree adjunction \eqref{forgetful-cofree}.  As categories both
\[\lcm = \M \andspace \coalgklcm = \coalgkm\]
are locally presentable.  Since $\lcm$ is a cofibrantly generated model category by assumption, it is a combinatorial model category.  It remains to check the acyclicity condition \eqref{acyclicity} for the forgetful-cofree adjunction \eqref{forgetful-cofree-lcm}.  We have that:
\[\begin{split}
\left(\Uinv \Coflcm\right)^{\boxslash}
&= \left(\Uinv \Cofm\right)^{\boxslash}\\
&\subseteq  \Uinv \Wem\\
&\subseteq  \Uinv \Welcm.
\end{split}\]
The first equality follows from $\Cofm = \Coflcm$.  The first inclusion is the acyclicity condition \eqref{acyclicity}, which we assumed is true.  The last inclusion follows from the inclusion
\[\Wem \subseteq \Welcm.\]
This proves the first assertion.  The second assertion follows from the first assertion and  Theorem \ref{clockwise.ccw}.
\end{proof}

\section{Preservation of Comonadic Coalgebras Under Bousfield Localization}
\label{sec:preservation-coalgebra}

Left Bousfield localization does not preserve algebraic structure in general.  For example, in the category of symmetric spectra with the stable model structure, the $(-1)$-Postnikov section is a left Bousfield localization that does not preserve monoids \cite{cgmv}.  Therefore, we should not expect left Bousfield localization to preserve comonadic coalgebras in general.  The main result of this section  (Theorem \ref{left-comonad-preservation}) provides conditions under which comonadic coalgebra structures are preserved by left Bousfield localization.  This preservation result is the comonad analogue of \cite{white-yau} (Theorem 7.2.3).  Its analogue for a monad and right Bousfield localization is \cite{white-yau2} (Theorem 6.2).  A strengthened version of this preservation result is Theorem \ref{comonad.preservation.eq} below.

\begin{definition}\label{def:preserves-coalgebra}
Under Assumption \ref{standing.assumptions}, we say that \emph{$\LC$ preserves $K$-coalgebras} if the following statements hold.
\begin{enumerate}
\item For each $K$-coalgebra $X$, there exists a $K$-coalgebra $\xtilde$ such that $U\xtilde$ is a $\C$-local object that is weakly equivalent to the localization $\LC UX$ in $\M$.
\item 
If $X$ is a cofibrant $K$-coalgebra, then:
\begin{enumerate}
\item There is a natural choice of $\xtilde$ as part of a $K$-coalgebra morphism $r : X \to \xtilde$. 
\item There is a weak equivalence $\beta : U\xtilde \to \LC UX$ in $\M$ such that
\[\beta \circ Ur = l\]
in $\Ho(\M)$, where $l : UX \to \LC UX$ is the localization morphism.
\end{enumerate}
\end{enumerate}
\end{definition}

Note the connection between the hypotheses of the next result and Theorems \ref{clockwise.ccw} and \ref{coalgklcm-admissible}.

\begin{theorem}\label{left-comonad-preservation}
Under Assumption \ref{standing.assumptions}, suppose $\coalgklcm$ admits the left-induced model structure via the forgetful functor  
\[\nicexy{\coalgklcm \ar[r]^-{U} & \lcm}\]
in the forgetful-cofree adjunction \eqref{forgetful-cofree-lcm}.  If this forgetful functor $U$ preserves fibrant objects, then $\LC$ preserves $K$-coalgebras.
\end{theorem}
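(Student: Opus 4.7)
The plan is to produce $\xtilde$ as a fibrant replacement of $X$ in the left-induced model category $\coalgklcm$, and to use the hypothesis that $U$ preserves fibrant objects to ensure $U\xtilde$ is $\C$-local. The crucial structural fact is that, since $\coalgklcm$ is left-induced over $\lcm$ via $U$, a map $f$ in $\coalgklcm$ is a (trivial) cofibration if and only if $Uf$ is a (trivial) cofibration in $\lcm$; in particular, $U$ is left Quillen in the adjunction \eqref{forgetful-cofree-lcm}.

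To verify condition (1) of Definition \ref{def:preserves-coalgebra}, factor $X \to *$ in $\coalgklcm$ as a trivial cofibration $r : X \to \xtilde$ followed by a fibration, making $\xtilde$ fibrant in $\coalgklcm$. By hypothesis, $U\xtilde$ is then fibrant in $\lcm$, i.e., $\C$-local. Since $Ur : UX \to U\xtilde$ is a trivial cofibration in $\lcm$ and $\LC UX$ is fibrant in $\lcm$, I would lift the localization map $l : UX \to \LC UX$ across $Ur$ to produce a map $\beta : U\xtilde \to \LC UX$ in $\M$ with $\beta \circ Ur = l$. Two-out-of-three in $\lcm$ then makes $\beta$ a $\lcm$-weak equivalence, and because $U\xtilde$ and $\LC UX$ are both $\C$-local, $\beta$ is automatically an $\M$-weak equivalence by the standard characterization of $\C$-local equivalences between $\C$-local objects \cite{hirschhorn}.

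For condition (2), perform the same construction using a functorial factorization in $\coalgklcm$, so that $r$ and $\xtilde$ depend naturally on $X$. When $X$ is cofibrant, the lift $\beta$ and the strict identity $\beta \circ Ur = l$ in $\M$ (which in particular holds in $\Ho(\M)$) give exactly the data required by clause (2)(b), and $\beta$ is an $\M$-weak equivalence by the argument of the previous paragraph.

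The main obstacle is ensuring that $U\xtilde$ is weakly equivalent to $\LC UX$ in $\M$ rather than merely in $\lcm$. This is precisely where the hypothesis on $U$ is used: preservation of fibrant objects guarantees that $U\xtilde$ is a $\C$-local object, so comparing the two fibrant replacements of $UX$ inside $\lcm$ automatically produces an $\M$-weak equivalence. Everything else reduces to routine lifting in $\lcm$ enabled by the left-Quillen property of $U$ in \eqref{forgetful-cofree-lcm}.
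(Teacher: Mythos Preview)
Your argument is correct and follows the same strategy as the paper: take $\xtilde$ to be a fibrant replacement of $X$ in $\coalgklcm$, use the hypothesis to make $U\xtilde$ a $\C$-local object, lift the localization map of $UX$ along the trivial cofibration $Ur$ in $\lcm$, and conclude via \cite{hirschhorn} (3.2.13) that the resulting $\lcm$-equivalence between $\C$-local objects is an $\M$-equivalence.

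The one difference worth noting is that for condition~(1) the paper first applies a cofibrant replacement $\QK$ in $\coalgkm$ and sets $\xtilde = \RKC\QK X$, building an auxiliary comparison $\alpha : U\QK X \to QUX$ before producing $\beta$. You skip this and go straight to the fibrant replacement of $X$ itself, which is legitimate: condition~(1) only asks for an $\M$-weak equivalence between $U\xtilde$ and $\LC UX$, and both are $\C$-local objects that are $\lcm$-equivalent to $UX$, so the comparison goes through without the preliminary cofibrant replacement. Your streamlining loses nothing, since for condition~(2) the cofibrancy of $X$ is assumed anyway; the paper's extra step is there only to make the identification $\LC UX \simeq \RC QUX$ explicit.
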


\begin{proof}
Write:
\begin{itemize}
\item
$Q$ and $\QK$ for the cofibrant replacements in $\M$ and $\coalgkm$, respectively; 
\item
$\RC$ and $\RKC$ for the fibrant replacements in $\lcm$ and $\coalgklcm$, respectively.
\end{itemize}
Pick a $K$-coalgebra $X$.  The localization $\LC UX$ is weakly equivalent in $\M$ to $\RC QUX$ and that $\RKC \QK X$ comes with a $K$-coalgebra structure.  We will take $\RKC \QK X$ to be our $\xtilde$, so we must show that
\begin{equation}
\label{comonad-left-weq}
\RC Q UX \simeq U \RKC \QK X
\end{equation}
in $\M$.  The proof proceeds in three steps.

For the first step, the cofibrant replacements $Q$ and $\QK$ give the commutative diagram
\begin{equation}\label{comonad-left-step-1}
\nicexy{\varnothing \ar@{>->}[d] \ar@{>->}[r] & QUX \ar@{>>}[d]^-{\sim}\\
U \QK X \ar@{.>}[ur]^-{\alpha} \ar[r]^-{\sim} & UX}
\end{equation}
in $\M$, in which $\varnothing$ is the initial object in $\M$. The right vertical morphism is a trivial fibration in $\M$.  The forgetful functor
\[U : \coalgkm \to \M,\]
being a left adjoint, preserves colimits and, in particular, the initial object.   The morphism $\varnothing \to \QK X$ is a cofibration in $\coalgkm$.  After applying $U$ its underlying morphism is a cofibration in $\M$ because the model structure on $\coalgkm$ is left-induced from that of $\M$.  So the dotted filler $\alpha$ exists in $\M$.  Moreover, the morphism $Q^K X \to X$ is a trivial fibration in $\coalgkm$, so after applying $U$ the bottom horizontal morphism is a weak equivalence in $\M$.  The $2$-out-of-$3$ property now implies that $\alpha$ is a weak equivalence in $\M$.

For the second step, first recall that
\begin{equation}\label{same-categories}
\M = \lcm \andspace \coalgkm = \coalgklcm
\end{equation}
as categories.   The fibrant replacements $\RC$ and $\RKC$ give the commutative diagram
\begin{equation}\label{comonad-left-step-2}
\nicexy{U \QK X \ar@{>->}[d]^-{\sim}_-{Ur} \ar@{>->}[r]_-{\sim}^-{l} & \RC U \QK X \ar@{>>}[d]\\
U \RKC \QK X \ar@{.>}[ur]_-{\beta} \ar[r] & \ast}
\end{equation}
in $\lcm$, in which $\ast$ is the terminal object in $\M$.  The top-right composite is the fibrant replacement of $U \QK X$ in $\lcm$.  The left vertical morphism is $U$ applied to the fibrant replacement in $\coalgklcm$:
\[\nicexy{\QK X \ar@{>->}[r]_-{\sim}^-{r} & \RKC \QK X.}\]
In the diagram \eqref{comonad-left-step-2}, the right vertical morphism is a fibration in $\lcm$.  The morphism $r$ is a trivial cofibration in $\coalgklcm$, so after applying $U$ the left vertical morphism in \eqref{comonad-left-step-2} is a trivial cofibration in $\lcm$.  Therefore, the dotted filler $\beta$ exists in $\lcm$.  Furthermore, the top horizontal morphism in \eqref{comonad-left-step-2} is a weak equivalence in $\lcm$.  So the $2$-out-of-$3$ property implies that $\beta$ is a weak equivalence in $\lcm$.

For the last step, consider the morphisms
\begin{equation}\label{comonad-left-step-3}
\nicexy{U \RKC \QK X \ar[r]^-{\beta}
& \RC U \QK X \ar[r]^-{\RC\alpha} & \RC QUX}
\end{equation}
in $\lcm$, in which $\alpha$ and $\beta$ are the morphisms in \eqref{comonad-left-step-1} and \eqref{comonad-left-step-2}, respectively.  Since $\alpha$ is a weak equivalence in $\M$, the second morphism $\RC \alpha$ is a weak equivalence in $\lcm$ between $\C$-local objects.  Furthermore, it was established in the previous paragraph that the first morphism $\beta$ is a weak equivalence in $\lcm$.  Since $\RKC \QK X$ is a fibrant object in $\coalgklcm$, by assumption $U \RKC \QK X$ is a fibrant object in $\lcm$ (i.e., a $\C$-local object).  So \cite{hirschhorn} (Theorem 3.2.13(1)) implies that both morphisms in \eqref{comonad-left-step-3} are weak equivalences in $\M$.  Therefore, $\RKC \QK X$ is a $K$-coalgebra whose underlying object, namely $U \RKC \QK X$, is weakly equivalent to the localization
\[\LC UX \simeq \RC QUX\]
in $\M$.

Next suppose $X$ is a cofibrant $K$-coalgebra in $\coalgkm$; i.e., $UX$ is cofibrant in $\M$.  In this case, the localization $\LC UX$ is weakly equivalent in $\M$ to the fibrant replacement $\RC UX$ in $\lcm$, and we may simply take $\alpha$ to be the identity morphism on $UX$ in step 1 above.  What we proved above now says that
\[\nicexy{U \RKC X \ar[r]^-{\beta} & \RC UX}\]
is a weak equivalence in $\M$.  So the fibrant replacement in $\coalgklcm$,
\[\nicexy{X \ar[r]^-{r} & \RKC X,}\]
is the desired lift of the localization morphism $l : UX \to \LC UX$.
\end{proof}

\section{Equivalent Approaches to Preservation of Comonadic Coalgebras}
\label{sec:equivalent-approach}

The main result of this section (Theorem \ref{comonad.preservation.eq}) provides equivalent  characterizations of preservation of comonadic coalgebras under left Bousfield localization (Def. \ref{def:preserves-coalgebra}).  Simultaneously, we provide a converse to the preservation Theorem \ref{left-comonad-preservation}.

Aside from Def. \ref{def:preserves-coalgebra}, another approach to preservation of comonadic coalgebras under left Bousfield localization is based on the following definition, which is the comonad version of \cite{crt} (Def. 7.3).

\begin{definition}\label{crt-comonad}
Under Assumption \ref{standing.assumptions}, we say that \emph{$\LC$ lifts to the homotopy category of $K$-coalgebras} if the following statements hold.
\begin{enumerate}
\item There exists a natural transformation $r : \Id \to L^K$ for functors on $\Ho\bigl(\coalgkm\bigr)$.
\item There exists a natural isomorphism $h : \LC U \to UL^K$ such that
\begin{equation}\label{hlu=ur}
h \circ lU = Ur
\end{equation}
in $\Ho(\M)$, where $l : \Id \to \LC$ is the unit of the derived adjunction
\[\Ho(\M) \adjoint \Ho(\lcm)\]
and $U$ is the forgetful functor.
\end{enumerate}
\end{definition}

A third approach to preservation of comonadic coalgebras under left Bousfield localization is based on the following definition, which is the comonad version of \cite{grso} (3.12).

\begin{definition}\label{grso-comonad}
Under Assumption \ref{standing.assumptions}, suppose $\Lcprime \coalgkm$, the left Bousfield localization with respect to $\C'$, exists.  We say that the forgetful functor
\[\nicexy{\Lcprime\coalgkm \ar[r]^-{U} & \lcm}\]
\emph{preserves left Bousfield localization} if, given any morphism $c: X \to \Lcprime X$ that is a $\C'$-local equivalence with $\C'$-local codomain in $\coalgkm$, the morphism $Uc$ is a $\C$-local equivalence with $\C$-local codomain in $\M$.
\end{definition}

In the previous definition, by Theorem \ref{clockwise.ccw}, the category $\coalgklcm$ admits the left-induced model structure via the forgetful functor to $\lcm$, and it is equal to $\Lcprime\coalgkm$ as a model category.

The following result shows that the three approaches to preservation of comonadic coalgebras under left Bousfield localization are equivalent and, furthermore, provides a converse to Theorem \ref{left-comonad-preservation}. It is essentially the comonad analogue of \cite{batanin-white} (Thm. 5.6).  Moreover, its analogue for a monad and right Bousfield localization is \cite{white-yau4} (Thm. 5.4).

\begin{theorem}\label{comonad.preservation.eq}
Under Assumption \ref{standing.assumptions}, suppose $\coalgklcm$ admits the left-induced model structure via the forgetful functor  
\begin{equation}\label{coalgklcm.forget}
\nicexy{\coalgklcm = \Lcprime\coalgkm \ar[r]^-{U} & \lcm}
\end{equation}
in which the equality is from Theorem \ref{clockwise.ccw}.  Then the following statements are equivalent.
\begin{enumerate}
\item $U$ in \eqref{coalgklcm.forget} preserves weak equivalences and fibrant objects.
\item $\LC$ preserves $K$-coalgebras (Def. \ref{def:preserves-coalgebra}).
\item $\LC$ lifts to the homotopy category of $K$-coalgebras (Def. \ref{crt-comonad}).
\item The forgetful functor preserves left Bousfield localization (Def. \ref{grso-comonad}).
\end{enumerate}
\end{theorem}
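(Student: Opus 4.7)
The plan is to establish the cycle $(1) \Rightarrow (2) \Rightarrow (3) \Rightarrow (1)$ together with the direct equivalence $(1) \Leftrightarrow (4)$.

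First I would dispatch $(1) \Leftrightarrow (4)$, which amounts to unwinding definitions. Because $\coalgklcm$ carries the left-induced model structure, $U$ automatically preserves weak equivalences, and by Theorem~\ref{clockwise.ccw} the equality $\coalgklcm = \Lcprime\coalgkm$ identifies fibrant objects in $\coalgklcm$ with $\C'$-local objects in $\coalgkm$. Combined with Lemma~\ref{cprime-local-eq}, which makes the weak-equivalence clause of $(4)$ automatic, both $(1)$ and $(4)$ collapse to the single assertion that every $\C'$-local object in $\coalgkm$ has $\C$-local underlying object in $\M$.

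Next, $(1) \Rightarrow (2)$ is precisely Theorem~\ref{left-comonad-preservation}. For $(2) \Rightarrow (3)$, I would define an endofunctor $L^K$ on $\Ho(\coalgkm)$ by $L^K X = \widetilde{\QK X}$, where $\QK$ is a cofibrant replacement in $\coalgkm$ and $\xtilde$ is the natural assignment from Definition~\ref{def:preserves-coalgebra}(2)(a). Naturality on cofibrant objects together with the standard cofibrant-replacement machinery makes $L^K$ a well-defined functor on $\Ho(\coalgkm)$. The required natural transformation $r : \Id \to L^K$ is induced by the $K$-coalgebra map of Definition~\ref{def:preserves-coalgebra}(2)(a), and the natural isomorphism $h : \LC U \cong UL^K$ is the $\Ho(\M)$-inverse of $\beta$ of Definition~\ref{def:preserves-coalgebra}(2)(b); the identity $h \circ lU = Ur$ in $\Ho(\M)$ is then a direct rewriting of $\beta \circ Ur = l$.

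The main obstacle is $(3) \Rightarrow (1)$. Given a fibrant $Y \in \coalgklcm$, I want to conclude that $UY$ is $\C$-local in $\lcm$. The equation $Ur_Y = h_Y \circ l_{UY}$ in $\Ho(\M)$ exhibits $Ur_Y$, up to the iso $h_Y$, as the localization map $l_{UY}$, which is a $\C$-local equivalence; hence $r_Y : Y \to L^K Y$ is a $\C'$-local equivalence by Lemma~\ref{cprime-local-eq}. The argument will be complete once $r_Y$ is upgraded to an honest weak equivalence in $\coalgkm$, for then $Ur_Y$ is a weak equivalence in $\M$ and the iso $h_Y$ forces $l_{UY}$ to be an isomorphism in $\Ho(\M)$, i.e., $UY$ is $\C$-local. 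Using naturality of $r$ together with idempotence of $\LC$, one verifies that $L^K$ is idempotent up to weak equivalence in $\coalgkm$, and a lifting argument leveraging the fibrancy of $Y$ in $\coalgklcm$ together with the left-induced model structure on $\coalgklcm$ produces the required weak equivalence in $\coalgkm$, completing the proof.
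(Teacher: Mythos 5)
Your overall architecture ($(1)\Rightarrow(2)\Rightarrow(3)\Rightarrow(1)$ plus folding in $(4)$) matches the paper for the first two implications, and your reorganization of $(4)$ --- proving $(1)\Leftrightarrow(4)$ directly rather than the paper's $(1)\Rightarrow(4)$ and $(4)\Rightarrow(2)$ --- is a reasonable variant. But the load-bearing implication $(3)\Rightarrow(1)$ has a genuine gap. You correctly reduce to showing that, for $Y$ fibrant in $\coalgklcm$, the coaugmentation $r_Y : Y \to L^K Y$ is a weak equivalence in $\coalgkm$. Knowing that $r_Y$ is a $\C'$-local equivalence does not suffice: the upgrade from local equivalence to weak equivalence (\cite{hirschhorn}, Theorem 3.2.13) requires \emph{both} endpoints to be local, and you have no control over $L^K Y$ --- it is just some object of $\Ho\bigl(\coalgkm\bigr)$ handed to you by Definition \ref{crt-comonad}, and nothing in $(3)$ says it is $\C'$-local or even fibrant in $\coalgkm$. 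Your proposed fix (``idempotence of $L^K$ up to weak equivalence plus a lifting argument'') cannot work as stated, because $r_Y$ is only a morphism of the homotopy category and has no lifting properties, and idempotence of $L^K$ by itself does not identify $L^K Y$ with $Y$ for local $Y$. The actual content here is that the abstractly given pair $(L^K, r)$ is a homotopy localization of $\Ho\bigl(\coalgkm\bigr)$ inverting exactly the $\C'$-local equivalences, so that by uniqueness of such localizations (\cite{crt}, Theorem 4.6) it is naturally isomorphic to $\Lcprime$; the paper's proof goes exactly through this identification, citing the first paragraph of the proof of Lemma 7.4 in \cite{crt} to verify the localization axioms. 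Without some substitute for that uniqueness argument, $(3)\Rightarrow(1)$ is not proved.

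A smaller issue: your claim that $(1)$ and $(4)$ ``collapse to the same assertion'' is only clean in the direction $(1)\Rightarrow(4)$. For $(4)\Rightarrow(1)$ you would take a fibrant $Y$ in $\coalgklcm$ and apply $(4)$ to $Y \to \Lcprime Y$, concluding that $U\Lcprime Y$ is $\C$-local and that $UY \to U\Lcprime Y$ is a weak equivalence in $\M$; this only makes $UY$ weakly equivalent to a $\C$-local object, which is not literally the statement that $UY$ is fibrant in $\lcm$. The paper sidesteps this by proving $(4)\Rightarrow(2)$ instead, which only requires producing \emph{some} local model $\xtilde$, not fibrancy of $UY$ itself. (The same ``weakly equivalent to local versus actually local'' looseness appears at the end of your $(3)\Rightarrow(1)$ when you pass from ``$l_{UY}$ is an isomorphism in $\Ho(\M)$'' to ``$UY$ is $\C$-local.'') I would either adopt the paper's routing $(1)\Rightarrow(4)\Rightarrow(2)$ or supply the missing fibrancy argument.
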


\begin{proof}
(1) $\Longrightarrow$ (2) is Theorem \ref{left-comonad-preservation}.  For (2) $\Longrightarrow$ (3) we take the augmented endofunctor $r : \Id \to L^K$ in Def. \ref{crt-comonad}(1) to be the image of $r : X \to \xtilde$ (Def. \ref{def:preserves-coalgebra}(2a)) in $\Ho\bigl(\coalgkm\bigr)$.  Then we take the natural isomorphism
\[h^{-1} : \LC U \cong UL^K\]
in Def. \ref{crt-comonad}(2) to be the image of the morphism $\beta$ (Def. \ref{def:preserves-coalgebra}(2b)) in $\Ho(\M)$ .

For (3) $\Longrightarrow$ (1) first note that the forgetful functor $U$ preserves weak equivalences because the model structure on $\coalgklcm$ is left-induced from that on $\lcm$ via $U$.  To see that $U$ preserves fibrant objects, first note that $(L^K,r)$ is a localization on $\Ho\bigl(\coalgkm\bigr)$ by the first paragraph of the proof of Lemma 7.3 in \cite{crt}, whose proof works here without change.  Using the assumed statement (3), Theorem 4.6 in \cite{crt} implies that the localization $L^K$ is unique up to a natural isomorphism.  By Theorem \ref{clockwise.ccw} $L^K$ coincides with the localization $\Lcprime$.  Since the image of $\LC U$ is always fibrant in $\lcm$, $h$ implies that $U$ in \eqref{coalgklcm.forget} preserves fibrant objects.  We have shown that the first three statements are equivalent.

To see that (1) $\Longrightarrow$ (4), simply note that the morphism $c$ in Def. \ref{grso-comonad} is a weak equivalence with fibrant codomain in $\Lcprime\coalgkm$, and hence also in $\coalgklcm$.  So (1) implies that the morphism $Uc$ in $\lcm$ is a weak equivalence with fibrant codomain, i.e., a $\C$-local equivalence with $\C$-local codomain in $\M$.

Finally, we show that (4) $\Longrightarrow$ (2).  Given any $K$-coalgebra $X$, consider the functorial fibrant replacement
\[\nicexy{X \ar@{>->}[r]^-{r}_-{\sim} & \Lcprime X \ar@{>>}[r] & \ast}\]
in $\coalgklcm = \Lcprime\coalgkm$, where $*$ denotes the terminal object.  Our choice of $X \to \xtilde$ is $r : X \to \Lcprime X$.  By (4) the morphism $Ur$ in $\M$ is a $\C$-local equivalence with $\C$-local codomain.  

Next we show that $U\Lcprime X$ is weakly equivalent to $\LC UX$ in $\M$ and that there is a weak equivalence $\beta$ as in Def. \ref{def:preserves-coalgebra}(2).  Consider the commutative solid-arrow diagram 
\[\nicexy{UX \ar@{>->}[d]_-{Ur}^-{\sim} \ar[r]^-{l_{UX}}_-{\sim} & \LC UX \ar@{>>}[d]\\ U\Lcprime X \ar[r] \ar@{.>}[ur]_-{\beta} & \ast}\]
in $\lcm$, where $*$ denotes the terminal object in $\M$.  Since $r$ is a trivial cofibration in $\coalgklcm$, whose model structure is left-induced from that on $\lcm$ via $U$, the morphism $Ur$ is a trivial cofibration in $\lcm$.  The top-right composite is the fibrant replacement of $UX$ in $\lcm$.  Since the right vertical morphism is a fibration, there is a dotted lift
\[\beta : U\Lcprime X \to \LC UX\]
such that
\[\beta \circ Ur = l_{UX}.\]
Furthermore, since the left and the top morphisms are both weak equivalences in $\lcm$, so is $\beta$ by the $2$-out-of-$3$ property.  As $\beta$ is a $\C$-local equivalence between $\C$-local objects, it is actually a weak equivalence in $\M$ by \cite{hirschhorn} (Theorem 3.2.12(1)).
\end{proof}

\section{Application to Chain (Co)operadic Coalgebras, Comonoids, and (Coring) Comodules}
\label{sec:chain-applications}

In this section, we apply Theorems \ref{coalgklcm-admissible} and \ref{comonad.preservation.eq} to obtain preservation results for comonoids and (coring) comodules in chain complexes under homological truncations.

\subsection{Bousfield Localization of Chain Cooperadic Coalgebras}
\label{coop.localization}

Fix a commutative unital ring $R$.  Let $\Chr$ denote the category of unbounded chain complexes of $R$-modules. Remark \ref{remark:bounded} shows it is also possible to work with non-negatively graded chain complexes. Equip $\Chr$ with the \emph{injective model structure}, denoted $\Chrinj$, which has quasi-isomorphisms as weak equivalences and degreewise monomorphisms as cofibrations \cite{hovey} (Theorem 2.3.13).  Then $\Chrinj$ is a left proper combinatorial model category.  In particular, for each set $\C$ of cofibrations in $\Chrinj$, the left Bousfield localization $\lcchrinj$ exists and is a left proper combinatorial model category.

Suppose $Q$ is a cooperad on $\Chr$ \cite{hkrs} (Section 6.1.2).  Recall that a \emph{$Q$-coalgebra} is a pair $(X,\delta)$ consisting of an object $X$ and a structure morphism
\[\nicexy{X \ar[r]^-{\delta} & Q(n) \otimes X^{\otimes n}}\]
for each $n \geq 1$ that satisfies suitable equivariance, coassociativity, and counity conditions.  We assume that the category $\coalgqchr$ of $Q$-coalgebras is locally presentable (techniques to verify this may be found in \cite{ching-riehl} (Prop. A.1)).

Let $I$ be the interval complex in $\Chr$ \cite{bayeh} (4.1.1). Suppose there is another cooperad $P$ on $\Chr$ equipped with a morphism $Q \otimes P \to Q$ of cooperads such that $R$ is a $P$-coalgebra, and the natural factorization
\[\nicexy{R \oplus R \ar@{>->}[r] & I \ar[r]^-{\sim} & R}\]
extends to a factorization in $\coalgpchr$, in which the first morphism is a cofibration and the second morphism is a weak equivalence in $\Chrinj$.  Then by \cite{hkrs} (Theorem 6.3.1) $\coalgqchrinj$ admits the left-induced model structure via the forgetful functor $U$ in the forgetful-cofree adjunction
\[\nicexy{\coalgqchrinj \ar@<2pt>[r]^-{U} & \Chrinj \ar@<2pt>[l]^-{\Gamma_Q}}.\]

Theorem \ref{coalgklcm-admissible} applies with $\M = \Chrinj$, $\C$ an arbitrary set of cofibrations in $\Chrinj$, and $K$ the comonad $\Gamma_Q$ whose coalgebras are $Q$-coalgebras.  We conclude that there is a commutative diagram:
\[\nicexy@C+1cm{
\Chrinj \ar@{|->}[d]_-{\text{left-induced}} \ar@{|->}[r]^-{\LC} & \lcchrinj \ar@{|->}[r]^-{\text{left-induced}} & \coalgqlcchrinj \ar@{=}[d]\\
\coalgqchrinj \ar@{|->}[rr]^-{\Lcprime} && \Lcprime\coalgqchrinj}\]
In plain language, the left Bousfield localization $\Lcprime\coalgqchrinj$ exists and is equal to the left-induced model structure on $\coalgqlcchrinj$ over $\lcchrinj$.

\subsection{Bousfield Localization of Chain Comonoids}

The setting of Section \ref{coop.localization} applies, in particular, when $Q$ is the cooperad for comonoids, which are assumed to be non-counital, in $\Chrinj$ \cite{hkrs} (Corollary 6.3.5).  So via the forgetful-cofree adjunction
\[\nicexy{\Comonchrinj \ar@<2pt>[r]^-{U} & \Chrinj \ar@<2pt>[l]^-{\Gamma_Q}}\]
the category $\Comonchrinj$ of comonoids in $\Chrinj$ admits the left-induced model structure.  With $\C$ an arbitrary set of cofibrations in $\Chrinj$, Theorem \ref{coalgklcm-admissible} implies the existence of the following commutative diagram.
\[\nicexy@C+1cm{
\Chrinj \ar@{|->}[d]_-{\text{left-induced}} \ar@{|->}[r]^-{\LC} & \lcchrinj \ar@{|->}[r]^-{\text{left-induced}} & \Comon\bigl(\lcchrinj\bigr) \ar@{=}[d]\\
\Comonchrinj \ar@{|->}[rr]^-{\Lcprime} && \Lcprime\Comonchrinj}\]
This means the left Bousfield localization $\Lcprime\Comonchrinj$ exists and is equal to the left-induced model structure on $\Comon\bigl(\lcchrinj\bigr)$ over $\lcchrinj$.

\subsection{Homological Truncations Preserve Chain Comonoids} \label{subsec:truncations}

As above, let $R$ be a ring, and recall that for any integer $n$, $S^n(R)$ denotes the chain complex that is $R$ in degree $n$ and 0 elsewhere \cite{hovey} (2.3.3). On $\Chrinj$, let $L_n$ be the left Bousfield localization with respect to the set $\mathcal{C}_n$ consisting of the unique morphism $c: S^{n+1} \to 0$. It is easy to verify that the $L_n$-local weak equivalences are the chain maps $f$ such that $H_{\leq n}f$ is an isomorphism and that an $L_n$-local object is a chain complex $X$ with $H_{>n}X = 0$, just like the Postnikov section in topology. We refer to $L_n$ as the \emph{homological truncation above $n$}. For a chain complex $X = \bigl\{X_i, d_i\bigr\}$, its homological truncation above $n$ can be explicitly described as
\[(L_nX)_i = \begin{cases} X_i & \text{ if $i \leq n$};\\
X_{n+1}/\ker d_{n+1} & \text{ if $i=n+1$};\\ 0 & \text{ if $i > n+1$}.\end{cases}\]
Here $d_{n+1} : X_{n+1} \to X_n$, and the differentials in $L_nX$ are induced by those in $X$.  The $L_n$-localization map of $X$ is the quotient map $q_n : X \to L_nX$.

We now observe that if $n < -1$, then the homological truncation above $n$ preserves comonoids, so all four conditions in Theorem \ref{comonad.preservation.eq} hold.  To see this, suppose $(X,\delta)$ is a comonoid in $\Chrinj$.  We define a comultiplication $\delta_n : L_nX \to (L_nX)^{\otimes 2}$ by
\[\delta_n(x) = \begin{cases} q_n^{\otimes 2}(\delta x) & \text{ if $|x| \leq n$};\\ 
0 & \text{ if $|x| \geq n+1$}\end{cases}\]
where $|x|$ means the degree of a homogeneous element $x$.  To see that this map gives $L_nX$ the structure of a comonoid, we first observe that the square
\begin{equation}\label{chain-comonoid-truncation}
\nicexy{X \ar[r]^-{q_n} \ar[d]_-{\delta} & L_nX \ar[d]^-{\delta_n}\\
X^{\otimes 2} \ar[r]^-{q_n^{\otimes 2}} & (L_nX)^{\otimes 2}}\end{equation}
is commutative.  This is true by definition for $x \in (L_nX)_{\leq n}$.  It remains to show that
\begin{equation}\label{truncate-delta-x}
q_n^{\otimes 2}(\delta x) = 0 \ifspace |x| \geq n+1.\end{equation}
Writing $\delta(x) = \sum_{i \in \mathbb{Z}} x_i \otimes x_{|x|-i} \in X^{\otimes 2}$, we have
\begin{equation}\label{qdeltax}
q_n^{\otimes 2} (\delta x) = \sum_{i \in \mathbb{Z}} q_nx_i \otimes q_nx_{|x|-i}.\end{equation}
By definition $q_nx_i = 0$ for $i>n+1$.  Furthermore, if $|x| \geq n+1 \geq i$, then $|x|-i \geq 0$, so $q_nx_{|x|-i}=0$ because $n<-1$.  This proves \eqref{truncate-delta-x}.

The coassociativity of the map $\delta_n$ is now a formal consequence of that of $\delta$ (i.e., $(\delta\otimes\Id)\delta=(\Id\otimes\delta)\delta$) and the commutativity of the square \eqref{chain-comonoid-truncation} (i.e., $\delta_nq_n = q_n^{\otimes 2}\delta$).  Indeed, we only need to prove the coassociativity of $\delta_n$ starting with an element $x \in (L_nX)_{\leq n}$, and this follows from the following computation:
\begin{equation}\label{deltan-coassociative}
\begin{split} &(\delta_n \otimes \Id)(\delta_n x) = (\delta_n \otimes \Id)(q_n^{\otimes 2})(\delta x)
= (\delta_n q_n \otimes q_n)(\delta x) = (q_n^{\otimes 2} \delta \otimes q_n)(\delta x)\\ 
&= q_n^{\otimes 3} (\delta \otimes \Id)(\delta x) = q_n^{\otimes 3} (\Id\otimes\delta)(\delta x)
= (q_n \otimes q_n^{\otimes 2}\delta)(\delta x) = (q_n \otimes \delta_nq_n)(\delta x)\\
&= (\Id \otimes \delta_n)(q_n^{\otimes 2})(\delta x) = (\Id \otimes \delta_n)(\delta_n x).
\end{split}\end{equation}
Therefore, $(L_nX,\delta_n)$ is a comonoid.

The commutativity of the square \eqref{chain-comonoid-truncation} now implies the $L_n$-localization map
\[q_n : (X,\delta) \to (L_nX,\delta_n)\]
is a map of comonoids.  So for $n<-1$, the homological truncation $L_n$ above $n$ lifts to the homotopy category of comonoids, and all four conditions in Theorem \ref{comonad.preservation.eq} hold.

\subsection{Bousfield Localization of Chain Comodules}

Suppose $B$ is a comonoid in $\Chr$, and $\Chrb$ is the category of right $B$-comodules \cite{doi}.  The setting of Section \ref{coop.localization} applies when $Q$ is the cooperad for right $B$-comodules \cite{hkrs} (Corollary 6.3.7).  So via the forgetful-cofree adjunction
\[\nicexy{\Chrinjb \ar@<2pt>[r]^-{U} & \Chrinj \ar@<2pt>[l]^-{\Gamma_Q}}\]
the category $\Chrinjb$ of right $B$-comodules in $\Chrinj$ admits the left-induced model structure.  With $\C$ an arbitrary set of cofibrations in $\Chrinj$, Theorem \ref{coalgklcm-admissible} implies the existence of the following commutative diagram.
\[\nicexy@C+1cm{
\Chrinj \ar@{|->}[d]_-{\text{left-induced}} \ar@{|->}[r]^-{\LC} & \lcchrinj \ar@{|->}[r]^-{\text{left-induced}} & \bigl(\lcchrinj\bigr)^B \ar@{=}[d]\\
\Chrinjb \ar@{|->}[rr]^-{\Lcprime} && \Lcprime\Chrinjb}\]
This means the left Bousfield localization $\Lcprime\Chrinjb$ exists and is equal to the left-induced model structure on $\bigl(\lcchrinj\bigr)^B$ over $\lcchrinj$.

\subsection{Homological Truncations Preserve Chain Comodules}\label{sec:pres-chain-comodules}

Suppose $B$ is a comonoid such that $B_j=0$ for $j \geq 0$.  We now observe that for each integer $n$, the homological truncation $L_n$ above $n$ preserves right $B$-comodules.  

To prove this, suppose $(M,\delta)$ is a right $B$-comodule with structure map $\delta : M \to M \otimes B$.  We define a right $B$-coaction $\delta_n : L_nM \to L_nM \otimes B$ by
\[\delta_n(x) = \begin{cases} (q_n \otimes \Id_B)(\delta x) & \text{ if $|x| \leq n$};\\ 
0 & \text{ if $|x| \geq n+1$}\end{cases}\]
for $x \in L_nM$.  An argument similar to \eqref{qdeltax} shows the square
\[\nicexy@C+.3cm{M \ar[r]^-{q_n} \ar[d]_-{\delta} & L_nM \ar[d]^-{\delta_n}\\
M \otimes B \ar[r]^-{q_n\otimes \Id_B} & L_nM\otimes B}\]
is commutative.  Using the commutativity of this square, a computation similar to \eqref{deltan-coassociative} proves the coassociativity of $\delta_n$.  

The commutativity of the previous square now implies the $L_n$-localization map
\[q_n : (M,\delta) \to (L_nM,\delta_n)\]
is a map of right $B$-comodules.  So for a comonoid $B$ with $B_{\geq 0}=0$ and an arbitrary integer $n$, the homological truncation $L_n$ above $n$ lifts to the homotopy category of right $B$-comodules, and all four conditions in Theorem \ref{comonad.preservation.eq} hold.

\subsection{Bousfield Localization of Coring Comodules}

Suppose $A$ is a monoid in $\Chr$, and $\Moda$ is the category of right $A$-modules.  Via the forgetful-Hom adjunction
\[\nicexy@C+1cm{\Moda \ar@<2pt>[r]^-{U} & \Chrinj \ar@<2pt>[l]^-{\Hom(A,-)}}\]
the category $\Moda$ admits a left-induced left-proper, combinatorial model structure that coincides with the usual right-induced model structure \cite{hkrs} (2.2.3 and 6.6) and \cite{hess-shipley} (3.11).  In particular, for each set $\C$ of cofibrations in $\Moda$ (i.e., maps in $\Moda$ that are underlying cofibrations in $\Chrinj$), the left Bousfield localization $\lcmoda$ exists and is a left proper, combinatorial model category.

Suppose $B$ is an $A$-coring, which means that it is a comonoid in the monoidal category of $(A,A)$-bimodules, and $\Modab$ is the category of right $B$-comodules in right $A$-modules.  There is a forgetful-cofree adjunction
\begin{equation}\label{fcofree-coring-comodule}
\nicexy@C+.5cm{\Modab \ar@<2pt>[r]^-{U} & \Moda \ar@<2pt>[l]^-{- \otimes_A B}}.
\end{equation}
The category $\Modab$ is locally presentable, and it admits the left-induced model structure via $U$ in \eqref{fcofree-coring-comodule} \cite{hkrs} (6.6.3).

Theorem \ref{coalgklcm-admissible} applies with $\M = \Moda$, $\C$ an arbitrary set of cofibrations in $\Moda$, and $K$ the comonad whose category of coalgebras is $\Modab$.  In other words, we have the following commutative diagram.
\begin{equation}\label{comodule-localization-commute}
\nicexy@C+1cm{
\Moda \ar@{|->}[d]_-{\text{left-induced}} \ar@{|->}[r]^-{\LC} & \lcmoda \ar@{|->}[r]^-{\text{left-induced}} & \lcmodab \ar@{=}[d]\\
\Modab \ar@{|->}[rr]^-{\Lcprime} && \Lcprime\Modab}\end{equation}
This means the left Bousfield localization $\Lcprime\Modab$ exists and is equal to the left-induced model structure on $\lcmodab$ over $\lcmoda$.

\subsection{Homological Truncations Preserve Coring Comodules}

Suppose $A$ is a non-unital monoid satisfying $A_{\leq 0}=0$, and $B$ is a non-counital $A$-coring satisfying $B_{\geq 0}=0$.  We now observe that for each integer $n$, the homological truncation $L_n$ above $n$ preserves right $B$-comodules in $\Moda$.

Indeed, we simply reuse the arguments in Section \ref{sec:pres-chain-comodules}.  For a right $B$-comodule $(M,\delta,\mu)$ in $\Moda$ with right $A$-action $\mu : M \otimes A \to M$, we define a right $A$-action $\mu_n : L_nM \otimes A \to L_nM$ by
\[\mu_n(x,a) = \begin{cases} q_n\mu(x,a) & \text{ if $|x| \leq n$};\\ 
0 & \text{ if $|x| \geq n+1$}\end{cases}\]
for $x \in L_nM$ and $a \in A$.  If $|x| \geq n+1$, then
\[q_n\mu(x,a) = 0 \forallspace a \in A\]
since $|a|>0$ implies $|\mu(x,a)|>n+1$.  So the square
\[\nicexy@C+.3cm{M \otimes A \ar[r]^-{q_n \otimes \Id_A} \ar[d]_-{\mu} & L_nM \otimes A \ar[d]^-{\mu_n}\\
M \ar[r]^-{q_n} & L_nM}\]
is commutative.  Using the commutativity of this square, a computation similar to \eqref{deltan-coassociative} proves the associativity of $\mu_n$.  

The commutativity of the previous square now implies the $L_n$-localization map
\[q_n : (M,\delta,\mu) \to (L_nM,\delta_n,\mu_n)\]
is a map in $\Moda$, hence in right $B$-comodules in $\Moda$ when combined with the arguments in Section \ref{sec:pres-chain-comodules}.  So for a non-unital monoid $A$ with $A_{\leq 0}=0$, a non-counital $A$-coring $B$ with $B_{\geq 0}=0$, and an arbitrary integer $n$, the homological truncation $L_n$ above $n$ lifts to the homotopy category of right $B$-comodules in $\Moda$, and all four conditions in Theorem \ref{comonad.preservation.eq} hold.

\begin{remark} \label{remark:bounded}
Throughout this section, we could have also worked with non-negatively graded chain complexes \cite{hkrs} (6.5, 6.6). The injective model on $\M = \Chr^{\geq 0}$ is combinatorial and has all objects cofibrant, hence is left proper. When $R$ is a field, there is model structure on coassociative, counital coalgebras \cite{sore} (Thm. 3.3) that is left-induced from $\M$, originally due to Getzler and Goerss. Theorem \ref{coalgklcm-admissible} applies in this setting for any left Bousfield localization. The left Bousfield localizations of bounded chain complexes are characterized in \cite{white-localization} (7.5).
Early results of Hess, related to Hopf-Galois extensions and (co)descent, used the setting of non-negatively graded chain complexes, and produced model structures on comodules and coalgebras, but were subsumed by \cite{bayeh} and \cite{hkrs}.
In this context of bounded chain complexes, Theorem \ref{comonad.preservation.eq} can be used to lift localizations from categories of algebras and modules (which can themselves be lifted from chain complexes using \cite{batanin-white,white-yau}) to categories of $H$-coalgebras or $V$-comodules. 

\end{remark}

The category of non-negatively graded cochain complexes with the injective model structure is combinatorial and has all objects cofibrant \cite{white-yau2} (Section 11), so the first two conditions of Theorem \ref{coalgklcm-admissible} are satisfied. 

\begin{problem}
Work out when the categories of $K$-coalgebras discussed in this section admit a left-induced model structure in the context of cochain complexes, and then apply Theorems \ref{coalgklcm-admissible} and \ref{comonad.preservation.eq} to deduce preservation results for coalgebras under left Bousfield localization.
\end{problem}

There is also a Hurewicz model structure on chain complexes, where weak equivalences are chain homotopy equivalences. In general, this model structure is not cofibrantly generated in the classical sense but is \textit{enriched cofibrantly generated} \cite{hkrs} (4.1). Hence, the first condition of Theorem \ref{coalgklcm-admissible} does not apply. Nevertheless, categories of coalgebras over comonads often admit left-induced model structures in this setting \cite{hkrs} (6.3.7, 6.6.3).

\begin{problem}
Generalize Theorems \ref{coalgklcm-admissible} and \ref{comonad.preservation.eq} to the setting of enriched cofibrant generation and deduce preservation results for coalgebras in the Hurewicz model structures under left Bousfield localization.
\end{problem}

\begin{remark}
There are many interesting examples of left Bousfield localizations on chain complexes, beyond the homological truncation example above. First, any exact colocalization gives rise to a localization \cite{white-yau2} (11.3), so the colocalization (right Bousfield localization) examples of \cite{white-yau2} (Sec. 11) yield left Bousfield localizations. More generally, the local duality framework of \cite{valenzuela1} (3.2) provides many examples. For example, if $A$ is a ring and $I\subset A$ is a finitely generated ideal, then there are smashing localization and colocalization functors, for $I$-localization and $I$-completion \cite{valenzuela1} (Thm. 3.11). These local duality localizations are Bousfield localizations with respect to a set of morphisms, e.g., $I$-localization corresponds to the homology theory $M\mapsto H_*(R/I \otimes_R M)$ and hence to a left Bousfield localization at a set of morphisms constructed using the Bousfield-Smith cardinality argument (see Section \ref{sec:smashing-comodule}). Since our results above imply that every left Bousfield localization preserves coalgebra structure in these algebraic contexts, Theorem \ref{comonad.preservation.eq} applies to these examples.
\end{remark}

\subsection{Model structures on coalgebras over operads}

Let $R$ be a semi-simple ring. Let $P$ be an operad in $\Chr$. A chain complex $X$ is a co-algebra over $P$ if there is a morphism of operads from $P$ to the coendomorphism operad of $X$, defined by $\CoEnd_X(n) = \Hom(X,X^{\otimes n})$.

If $P$ is a planar operad, or a cofibrant operad, or if $P$ is the linear analogue of the Barratt-Eccles operad, then there is a model structure on the category of $P$-coalgebras left-induced from the injective model structure on $\Chr$ \cite{brice2} (8.9). Hence, we say that $P$ is \textit{coadmissible}. This result builds on earlier work of Getzler and Goerss, which worked with the associative operad. 

In fact, left-induced model structure of \cite{brice2} (8.9) exists slightly more generally, in $\ChA$ where $\cat A$ is a semi-simple abelian category satisfying certain categorical conditions (e.g., the rewrite conditions) required to make the category of $P$-coalgebras comonadic \cite{brice2} (5.14). However, these left induced model structures do not exist for arbitrary $P$, e.g., the Com operad is not coadmissible even if $R$ is an algebraically closed field of characteristic zero \cite{brice2} (8.10).

Analogously to the examples above, one can now formulate preservation results for these categories of coalgebras under various left Bousfield localizations such as truncations or local duality localizations, using Theorem \ref{comonad.preservation.eq}. This gives another way to study coalgebras over cooperads, since the linear dual of an operad is a cooperad with the same coalgebras, and vice versa.

\subsection{Model Category of Cooperads}

Let $k$ be a field. Endow the category of bounded-below chain complexes of $k$-vector spaces with the levelwise injective model structure. Then, there is a left-induced model structure on the category of unital cooperads \cite{aubry} (Thm. 2.4.1).

Let $F$ be the comonad dual to the free operad functor. Let $P$ be a quasi-cofree unital $F$-comonoid. Then the category of $P$-coalgebras has a model structure left-induced from the injective model structure on unbounded chain complexes of $k$-vector spaces \cite{aubry} (Thm. 3.2.3).

Now let $k$ be a field of characteristic zero. The Hinich model structure on dg-operads has quasi-isomorphisms as weak equivalences and surjections as fibrations. The bar-cobar adjunction may be used to left-induce a model structure on the category of curved conilpotent cooperads, from this model structure on dg operads \cite{brice2} (Theorem 7.15). Note that this result is not true if we shift from dg operads to dg-categories. %\cite{brice} (Thm. 10)

In all three of these settings, the machinery of Section \ref{coop.localization} applies, and hence all coalgebras are preserved by any left Bousfield localization. In particular, Theorem \ref{coalgklcm-admissible} produces left-induced model structures on coalgebras after truncation (\ref{subsec:truncations}) and Theorem \ref{comonad.preservation.eq} characterizes when truncations preserve coalgebras. Truncations in operadic contexts were critically important in a recent proof of the Baez-Dolan stabilization hypothesis \cite{Reedy-paper}.

\subsection{Koszul duality}

Another way to left-induce model structures on coalgebras relies on Koszul duality contexts. Classically, Quillen equivalences relate associative algebras and coassociative coalgebras (since the associative operad is its own Koszul dual), and relate dg Lie algebras and cocommutative dg coalgebras (since the commutative operad is Koszul dual to the Lie operad). 

For simplicity, we work over a field $k$ of characteristic zero. The basic idea of Koszul duality is to start with an operad $P$ and construct a cooperad $BP$ via the bar construction, such that $P$-algebras and $BP$-coalgebras are related by the bar-cobar adjunction. Often, model structures can be left induced across the bar-cobar adjunction, and this provides numerous settings for Theorem \ref{comonad.preservation.eq} to apply.

The bar-cobar adjunction leads to a number of left-induced model structures on coalgebras, that we discuss below. Our main references, \cite{hirsh, brice0}, subsume the work of many authors, which we briefly recall. Quillen produced a model structure (for the case $k = \mathbb{Q}$) on the category of simply connected cocommutative dg coalgebras, left induced along the cobar functor to dg Lie algebras. % Quillen: rational homotopy theory
Hinich extended this to the category of unital coalgebras, again left induced from dg Lie algebras, 
and to the category of conilpotent cocommutative coalgebras.
Lefevre-Hasegawa applied the same method to produce a left-induced model structure for coassociative coalgebras. 
Vallette then extended this method to the context of Koszul operads $P$, by which he means operads that admit a quadratic presentation by generators and relations satisfying certain axioms 
that allow him to construct the Koszul dual cooperad (which he denotes $P^{\hspace{1pt}\ac}$) such that the cobar construction of $P^{\hspace{1pt}\ac}$ is weakly equivalent to $P$. He then produces a model structure on conilpotent $P^{\hspace{1pt}\ac}$-coalgebras, left-induced from the usual model structure on $P$-algebras (which is right-induced from $\Chk$), for which the bar-cobar adjunction is a Quillen equivalence. As pointed out in \cite{brice0}, Vallette's conditions force $P$ to be augmented, which rules out operads that describe algebras with units. 

All of these results were subsumed by \cite{hirsh}, who assume that $P$ is an augmented operad and $C$ is a conilpotent coaugmented weight-graded cooperad, together with a morphism $\alpha: P\to \Omega C$ of operads (where $\Omega C$ is the cobar construction of the cooperad $C$, and $\alpha$ is called a \textit{twisting morphism from $C$ to $P$}). If either $k$ has characteristic zero or $P$ and $\Omega C$ are $\Sigma$-split operads, then the category of conilpotent $C$-coalgebras has a model structure left-induced from the category of $P$-algebras along the cobar functor $\Omega_\alpha$ associated to $\alpha$ \cite{hirsh} (Theorem 3.11). Note that $C$ is now not assumed to be dual to $P$ in any way, and a particular choice of $P$ and $\alpha$ yields model structures left-induced from $\Chk$ along a functor $\Omega_\epsilon$, in particular recovering earlier results of Positselski on conilpotent dg-coalgebras. There are many twisting morphisms, so this procedure produces many model structures on coalgebras, e.g., the authors list three model structures on conilpotent coassociative coalgebras (and on categories of comodules), coming from different twisting morphisms and with different classes of weak equivalences \cite{hirsh} (Section 4). Furthermore, \cite{hirsh} (Prop. 3.15) gives conditions under which $\Omega_\alpha$ is a Quillen equivalence, and \cite{hirsh} (Remark 5.13) proves that these model structures exist in the case of bounded as well as unbounded chain complexes.

An alternative generalization of Vallette's results, to the setting of curved conilpotent cooperads $C$, appears in \cite{brice0}. For any such $C$, an \textit{operadic twisting morphism} $\alpha$ is defined to be a degree $-1$ map from the symmetric collection underlying $C$ to the symmetric collection underlying $P$ \cite{brice0} (Def. 62). In the setting of $\Chk$ where $k$ is a field of characteristic zero, there is a model structure on $C$-coalgebras left-induced from $P$-algebras along the cobar functor $\Omega_\alpha$ associated to $\alpha$ \cite{brice0} (Theorem 82). The curved setting allows for the study of algebras with units, and recovers model structures on curved coalgebras previously studied by Positselski. Conditions under which $\Omega_\alpha$ is a Quillen equivalence are provided in \cite{brice0} (Theorem 110). For non-symmetric (co)algebras, the characteristic zero assumption can be dropped \cite{brice0} (Theorem 82). As a special case we obtain left-induced model structures on categories of noncounital curved conilpotent coassociative coalgebras \cite{brice0} (Prop. 123), on curved conilpotent coalgebras \cite{brice0} (Remark 126), and on curved conilpotent Lie coalgebras \cite{brice0} (Prop. 129).

These model structures on coalgebras over (curved) co-operads are connected via left Bousfield localizations \cite{hirsh} (Theorem 3.11), \cite{brice0} (Prop. 96). Furthermore, our previous papers \cite{batanin-white, white-localization, white-yau} have shown how to lift left Bousfield localizations to categories of $P$-algebras. Hence, we can apply Theorem \ref{coalgklcm-admissible} to the model category $\M$ of $P$-algebras, to verify the condition of Theorem \ref{comonad.preservation.eq}, and thereby further lift them to categories of $C$-coalgebras, along any cobar functor $\Omega_\alpha$. We can also lift localizations from $\Chk$ to $C$-coalgebras along the functors $\Omega_\epsilon$ discussed above. In cases when $\Omega_\alpha$ is a Quillen equivalence (\cite{hirsh} (Prop. 3.15), \cite{brice0} (Theorem 110)), the condition of Definition \ref{crt-comonad} is automatic for any localization of $P$-algebras lifted from $\Chk$ using \cite{batanin-white} (Theorem A), and hence the equivalent conditions of Theorem \ref{comonad.preservation.eq} are satisfied in this case, for any left Bousfield localization.

\section{Application to Comodules and Coalgebras in (Localized) Spectra}
\label{sec:spectra-applications}

In this section, we apply Theorems \ref{coalgklcm-admissible} and \ref{comonad.preservation.eq} to obtain preservation results for comodules in (localized) spectra under smashing localizations.

\subsection{Bousfield Localization of Spectral Comodules}
\label{sp.comodule.coalg}

Denote by $\Spinj$ the model category of symmetric spectra of simplicial sets with the \emph{injective} model structure; see \cite{hss} (Section 5).  In the injective model structure, cofibrations are monomorphisms, and weak equivalences are defined levelwise. Since every object is cofibrant, the combinatorial simplicial model category $\Spinj$ is also left proper.  

Suppose $A$ is a ring spectrum, i.e., a monoid in $\Spinj$.  The category $\Moda$ of right $A$-modules admits a left-induced left proper, combinatorial, simplicial model structure via the forgetful functor to $\Spinj$; see \cite{bayeh} (2.23) and \cite{hkrs} (2.2.3, 5.0.1, and 5.0.2).   Since $\Moda$ is a left proper, combinatorial, simplicial model category, for each set $\C$ of cofibrations in $\Moda$ (i.e., maps in $\Moda$ that are underlying cofibrations in $\Spinj$), the left Bousfield localization $\lcmoda$ exists and is a left proper, combinatorial, simplicial model category.

Now suppose $A$ is a strictly commutative ring spectrum, i.e., a commutative monoid in $\Spinj$, and $B$ is an $A$-coalgebra.  Denote by $\Modab$ the category of right $B$-comodules in $\Moda$.  There is a forgetful-cofree adjunction
\begin{equation}\label{forgetful-cofree-comodule}
\nicexy@C+.5cm{\Modab \ar@<2pt>[r]^-{U} & \Moda \ar@<2pt>[l]^-{- \wedge_A B}}.
\end{equation}
The category $\Modab$ is locally presentable because $\Moda$ is locally presentable and $\Modab$ is the category of coalgebras over a comonad $K_B$ on $\Moda$ that preserves colimits; see \cite{adamek-rosicky} (2.78) and \cite{ching-riehl} (Prop. A.1).  Moreover, $\Modab$ admits the left-induced model structure via $U$ in \eqref{forgetful-cofree-comodule} \cite{hkrs} (5.0.3).  

Theorem \ref{coalgklcm-admissible} applies with $\M = \Moda$, $\C$ an arbitrary set of cofibrations in $\Moda$, and $K$ the comonad whose category of coalgebras is the category $\Modab$.  We conclude that there is a commutative diagram as in \eqref{comodule-localization-commute}.  So the left Bousfield localization $\Lcprime\Modab$ exists and is equal to the left-induced model structure on $\lcmodab$ over $\lcmoda$.

\subsection{Smashing Localizations Preserve Spectral Comodules}
\label{sec:smashing-comodule}

For a spectrum $E$, recall that the $E$-localization $L_E$ \cite{bousfield} results in a model structure whose weak equivalences are the $E$-local equivalences, i.e., morphisms $f$ such that $E_*(f)$ is an isomorphism. The Bousfield-Smith cardinality argument, reproduced in \cite{hirschhorn} (2.3) produces a set of morphisms $\cat C$ such that $L_E = L_{\cat C}$ \cite{bousfield} (1.13), hence Theorem \ref{coalgklcm-admissible} can apply to these types of localizations. The localization $L_E$ is said to be \emph{smashing} if
\[L_EX \cong L_ES^0 \wedge X\]
for each spectrum $X$. In other words, $L_E$ is smashing if each localization map $l_X : X \to L_EX$ is given by applying $-\wedge X$ to the localization map $l_{S^0} : S^0 \to L_ES^0$ of the sphere spectrum.  Examples of smashing localizations include:
\begin{enumerate}
\item $L_E$ with $E$ the Moore spectrum of a torsion-free group \cite{bousfield};
\item Miller's finite localization \cite{miller};
\item $L_{E(n)}$ with $E(n)$ the $n$th Morava $E$-theory \cite{ravenel} (Theorem 7.5.6).
\end{enumerate}

With the same setting as in Section \ref{sp.comodule.coalg}, suppose $(X,\lambda)$ is a right $B$-comodule in $\Moda$.  Suppose $L_E$ is a smashing localization.  Then the $E$-localization $L_EX$ inherits a natural right $B$-comodule structure with right $A$-action
\[\nicexy@C+.4cm{L_EX \wedge A \cong L_ES^0 \wedge X \wedge A \ar[r]^-{L_ES^0\wedge \mu} &
L_ES^0 \wedge X \cong L_EX,}\]
where $\mu : X \wedge A \to X$ is the right $A$-action on $X$, and right $B$-coaction
\[\nicexy@C+.4cm{L_EX \cong L_ES^0 \wedge X \ar[r]^-{L_ES^0\wedge \lambda}
& L_ES^0 \wedge X \wedge_A B \cong L_EX \wedge_A B.}\]
Furthermore, the localization map
\[l_X = l_{S^0} \wedge X : X \to L_ES^0 \wedge X \cong L_EX\]
of $X$ respects the right $A$-module structure and the right $B$-comodule structure because the diagrams
\[\nicexy@C-.2cm{X \wedge A \ar[d]_-{\mu} \ar[r]^-{l_X \wedge A} & L_ES^0 \wedge X \wedge A \ar[d]^-{\Id \wedge \mu}\\ X \ar[r]^-{l_X} & L_ES^0 \wedge X \cong L_EX}\qquad
\nicexy{X \ar[d]_-{\lambda} \ar[r]^-{l_X} & L_ES^0 \wedge X \cong L_EX \ar[d]^-{L_ES^0\wedge \lambda}\\ X \wedge_A B \ar[r]^-{l_X \wedge_A B} & L_ES^0 \wedge X \wedge_A B \cong L_EX \wedge_A B}\]
are commutative.

Therefore, every smashing localization $L_E$ lifts to the homotopy category of $K_B$-coalgebras in the sense of Def. \ref{crt-comonad}, where $K_B$ is the comonad for right $B$-comodules in $\Moda$.  By Theorem \ref{comonad.preservation.eq} all four conditions there hold.  In particular, every smashing localization $L_E$ preserves right $B$-comodules.

\subsection{Smashing Localizations on Localized Spectra Preserve Comodules}

Fix a prime $p$, and suppose $\sS$ is the $p$-localization of $\Spinj$. For an arbitrary but fixed $E \in \sS$, suppose $\M$ is the $E$-localization of $\sS$. We do not assume that $L_E$ is smashing. The category $\M$ is still a simplicial combinatorial model category with all objects cofibrant by Remark \ref{remark:cof-between-cof-obj}. Well-studied examples of such localized categories of spectra $\M$ and smashing localizations on $\M$ include the following cases:
\begin{itemize}
\item $E$ is the Morava theories $E(n)$ or $K(n)$ \cite{hovey-strickland};
\item $E$ is the wedge $\bigvee_{n \geq 0} K(n)$, the Eilenberg-Mac Lane spectrum $H\mathbb{F}_p$, or the Brown-Comenetz dual $IS^0$ of the sphere spectrum \cite{wolcott}.
\end{itemize}

For a comonoid $B$ in $\M$, the same discussion as in Sections \ref{sp.comodule.coalg} and \ref{sec:smashing-comodule} apply.  Indeed, the same proof as in \cite{hkrs} Theorem 5.0.3 (which deals with $\Spinj$ instead of $\M$) implies the category $\Comod(B;\M)$ of right $B$-comodules in $\M$ admits a left-induced model structure via the forgetful-cofree adjunction
\begin{equation}\label{comodule-forget-cofree}
\nicexy{\Comod(B;\M) \ar@<2pt>[r]^-{U} & \M. \ar@<2pt>[l]^-{-\wedge B}}\end{equation}
Theorem \ref{coalgklcm-admissible} now applies with $\C$ an arbitrary set of cofibrations in $\M$ and with $K$ the comonad whose category of coalgebras is $\Comod(B;\M)$.  We conclude that there is a commutative diagram
\begin{equation}\label{comodule-localization}
\nicexy@C+1cm{
\M \ar@{|->}[d]_-{\text{left-induced}} \ar@{|->}[r]^-{\LC} & \lcm \ar@{|->}[r]^-{\text{left-induced}} & \Comod(B;\lcm) \ar@{=}[d]\\
\Comod(B;\M) \ar@{|->}[rr]^-{\Lcprime} && \Lcprime\Comod(B;\M).}\end{equation}
So the left Bousfield localization $\Lcprime\Comod(B;\M)$ exists and is equal to the left-induced model structure on $\Comod(B;\lcm)$ over $\lcm$.

The exact same discussion as in Section \ref{sec:smashing-comodule} implies every smashing localization on $\M$ lifts to the homotopy category of $K$-coalgebras in the sense of Def. \ref{crt-comonad}, where $K$ is the comonad for right $B$-comodules in $\M$.  By Theorem \ref{comonad.preservation.eq} all four conditions there hold, so every smashing localization  preserves right $B$-comodules in $\M$.

\begin{remark} \label{remark:local-duality}
Local duality provides another example of a smashing localization of spectra \cite{valenzuela}. While \cite{valenzuela} is written in the context of stable $\infty$-categories, \cite{valenzuela1} points out that the results also hold in the combinatorial stable model categories we consider. If $R$ is a commutative ring spectrum (indeed, $E_2$ is enough), and $\mathfrak{p}$ is a finitely generated and homogeneous prime ideal of $\pi_* R$, then there is a smashing localization $L_{\mathfrak{p}}$ of $R$-modules with the property that $\pi_*(L_{\mathfrak{p}} M) \cong (\pi_* M)_{\mathfrak{p}}$ by \cite{valenzuela} (Prop. 3.4) or \cite{ekmm} (Ch. V.1) at the model categorical level. When $\pi_* R$ is Noetherian, this can be extended to any specialization-closed subset $\cat V$ of $\Spec(\pi_* R)$ \cite{valenzuela} (Thm. 3.9). More generally, any abstract local duality context where one localizes with respect to a localizing ideal \cite{valenzuela} (Lemma 2.7) yields a smashing localization. As shown above, these smashing localizations preserve comodule structure, and more generally preserve $K$-coalgebra structure in chain complexes (Sec.\ \ref{sec:chain-applications}) and in the stable module category (Sec.\ \ref{sec:application-stmod}) thanks to the work of \cite{valenzuela}.
\end{remark}

\begin{remark}
In this section, we have focused on smashing localizations, but non-smashing localizations can also preserve comonad structure. For example, the following left Bousfield localizations are not smashing \cite{hovey-strickland} (8.1):
\begin{enumerate}
\item localization $L_{K(n)}$ with respect to Morava $K$-theory,
\item harmonic localization, and 
\item $p$-completion.
\end{enumerate}
However, in many situations, a comodule over a comonoid remains so after $p$-completion, so Theorem \ref{comonad.preservation.eq} applies. Many more examples of smashing and non-smashing localizations of spectra are cataloged in \cite{white-localization}. Furthermore, \cite{white-yau2} (Sec. 11) discusses Bauer's technique of lifting localizations and colocalizations from the category of chain complexes to spectra. Lastly, thanks to cofiber sequences, every exact colocalization gives rise to a localization, so the colocalizations of \cite{white-yau2} also provide interesting examples where one can attempt to use the machinery of Theorem \ref{comonad.preservation.eq} to lift left Bousfield localizations to $K$-coalgebras.
\end{remark}

\subsection{Spectral coalgebras}

Let $A$ be a commutative symmetric spectrum. There are combinatorial model structures on the category of $A$-coalgebras and on the category of cocommutative $A$-coalgebras, both left induced from the injective stable model structure on $A$-modules \cite{peroux22} (Prop. 5.2, 5.3). This is a natural setting for computations of topological coHochschild homology. It would be natural to compute a left Bousfield localization with respect to the $coTHH_*$-isomorphisms. It would also be natural to lift smashing localizations to categories of coalgebras, using the techniques of Section \ref{sec:equivalent-approach}. We leave this application to the interested reader.

\subsection{Projective model structure}

Left-induced model structures on spectra and localized spectra play a role in studying Waldhausen $K$-theory via comodules. Let $\Sppr$ denote the projective model structure, and let $\Sp$ denote the stable projective model structure, on the category of symmetric spectra. For a generalized homology theory $E_*$, let $\SpprE$ and $\Sp_E$ denote the $E$-local model structures, obtained by left Bousfield localization. If $X$ is a simplicial set, then $\Sigma^\infty X_+$ is a coalgebra (thanks to the diagonal map), and the category of spectral comodules has a model structure left induced from $\Sppr$ or $\SpprE$ \cite{wald} (Prop. 5.17) or from $\Sp$ or $\Sp_E$ \cite{wald} (Thm. 5.2). 
There are also twisted homology localizations \cite{wald} (Prop. 5.19), for which the set of morphisms $\cat C$ to invert may be taken to be the set of generating trivial cofibrations from \cite{wald} (Thm. 4.10) for pointed simplicial sets, or its stabilization for spectra. Hence, the model structure of \cite{wald} (Prop. 5.19) may be constructed as a left Bousfield localization of the model structure on spectral comodules \cite{wald} (Thm. 5.2). Lastly, if $H$ is a simplicial monoid then there is a model structure on the category of $\Sigma^\infty H_+$-comodules in the category of ring spectra, left induced from the $E$-local model structure on ring spectra \cite{wald} (Cor. 5.7). All of these settings satisfy the conditions of Theorem \ref{clockwise.ccw}, where the set of morphisms $\cat C$ is constructed using the Bousfield-Smith cardinality argument as in Section \ref{sec:smashing-comodule}. Thus, we can use Theorem \ref{comonad.preservation.eq} to obtain preservation and localization-lifting results in projective settings, as we did above for the injective model structure.

\subsection{Motivic and equivariant spectra}

As far as the authors know, left-induced model structures on categories of coalgebras have not been produced in motivic or equivariant contexts. Hence, this subsection focuses on open problems that may be of interest to the reader.

\begin{problem}
Extend the proof of \cite{bayeh} (Thm.\ 5.0.3) to produce left-induced model structures on comodules over a coalgebra in motivic symmetric spectra, by replacing $sSet$ by motivic spaces, and using the good cylinder object therein.
\end{problem}

The existence of such a left-induced model structure appears likely. With it in hand, one can ask questions regarding the preservation of $K$-coalgebras under left Bousfield localization.

\begin{problem}
Use the approach of Section \ref{sec:preservation-coalgebra} to prove preservation results for motivic comodules (and coalgebras over cooperads) under left Bousfield localizations as we have done above for symmetric spectra.
\end{problem}

The situation for equivariant spectra is more difficult. First, most of the literature on equivariant spectra begins with the category of compactly generated spaces, which is not locally presentable. In \cite{white-yau6} (Sec. 6), we formulated a number of open problems regarding the existence of useful model structures on categories of equivariant spectra, including injective model structures on global equivariant spectra, on Hausmann's $G$-symmetric spectra, and on Mandell's equivariant symmetric spectra. In \cite{white-localization} (Lemma 5.60), the first author produced injective model structures on $G$-equivariant orthogonal spectra based on the locally presentable category of $\Delta$-generated spaces. We are therefore in a position to pose the following problem.

\begin{problem}
Extend the proof of \cite{bayeh} (Thm.\ 5.0.3) to produce left-induced model structures on comodules over a coalgebra in equivariant spectra, based on simplicial sets or on $\Delta$-generated spaces, with any of the injective model structures just listed.
\end{problem}

If this can be done, then there is a natural follow-up question to determine which left Bousfield localizations of equivariant spectra (cataloged in \cite{gutierrez-white-equivariant, white-localization} among other places) preserve $K$-coalgebra structure.

\begin{problem}
Use the approach of Section \ref{sec:preservation-coalgebra} to prove preservation results for comodules (and coalgebras over cooperads) in equivariant spectra under left Bousfield localizations as we have done above for symmetric spectra.
\end{problem}

Lastly, in both the motivic and equivariant setting, it would be natural to study coalgebras as well as comodules.

\begin{problem}
Extend P\'{e}roux's model structure on (cocommutative) spectral coalgebras \cite{peroux22} (Prop. 5.2, 5.3) to the injective model structures on motivic symmetric spectra, and on equivariant spectra discussed above, and then formulate preservation results for left Bousfield localization, following Section \ref{sec:equivalent-approach}.
\end{problem}

\section{Application to Comodules and Cooperadic Coalgebras in the Stable Module Category}
\label{sec:application-stmod}

In this section, we apply Theorems \ref{coalgklcm-admissible} and \ref{comonad.preservation.eq} to obtain preservation results for comodules and cooperadic coalgebras in the stable module category under smashing localizations.

\subsection{Smashing Localizations Preserve Comodules}

Another adaptation of Section \ref{sec:spectra-applications} applies to the stable module category.  Suppose $\M$ is the stable module category of $kG$-modules, where $k$ is a field whose characteristic divides the order of the finite group $G$, equipped with the cofibrantly generated model structure in \cite{hovey} (Section 2.2). It is possible to work over more general rings, as Remark \ref{remark:stanculescu} illustrates.

Suppose $B$ is a comonoid in $\M$.  The same proof of \cite{hkrs} (Corollary 6.3.7), which deals with $\Chr$ instead of $\M$, shows the category $\Comod(B;\M)$ of right $B$-comodules in $\M$ admits a left-induced model structure via the forgetful-cofree adjunction as in \eqref{comodule-forget-cofree}.  Theorem \ref{coalgklcm-admissible} now applies with $\C$ an arbitrary set of cofibrations in $\M$ and with $K$ the comonad whose category of coalgebras is $\Comod(B;\M)$.  So there is a commutative diagram as in \eqref{comodule-localization}.

Suppose $L_E$ is a smashing localization on the stable module category $\M$, so
\[L_EY \cong L_E\tensorunit \otimes Y\]
for each $Y \in \M$, where $\tensorunit$ is the monoidal unit in $\M$. 
The exact same discussion as in Section \ref{sec:smashing-comodule} implies $L_E$ lifts to the homotopy category of $K$-coalgebras, where $K$ is the comonad for right $B$-comodules in $\M$.  By Theorem \ref{comonad.preservation.eq} all four conditions there hold, so every smashing localization preserves right $B$-comodules in $\M$.  The reader is referred to \cite{benson-iyengar-krause-stratifying-localizing} (Theorem 11.12) for characterizations of smashing localizations on the stable module category.

\subsection{Smashing Localizations Preserve Comonoids}
\label{sec:smashing-comonoid}

Suppose $(X,\delta)$ is a non-counital comonoid in $\M$.  Suppose $L_E$ is a smashing localization.  Then $(L_EX,\Delta)$ is a non-counital comonoid with comultiplication $\Delta$ defined as the composite:
\[\nicexy{L_EX \cong L_E\tensorunit \otimes X \ar[r]^-{\Delta} \ar[d]_-{L_E\delta\,=\, L_E\tensorunit \otimes \delta} 
& L_EX \otimes L_EX \cong L_E\tensorunit\otimes X \otimes L_E\tensorunit \otimes X\\
L_E\tensorunit \otimes X \otimes X \ar[r]^-{\cong} 
& L_E\tensorunit \otimes L_E\tensorunit \otimes X \otimes X. \ar[u]^-{\cong}_-{\text{switch}}}\]
The bottom horizontal isomorphism comes from the idempotency of the smashing localization $L_E$, i.e., 
\begin{equation}\label{localization-idempotent}
L_E\tensorunit \cong L_EL_E\tensorunit \cong L_E\tensorunit \otimes L_E\tensorunit.\end{equation}
Since the localization map is given by $l_X = l_{\tensorunit} \otimes X$, similarly to Section \ref{sec:smashing-comodule} the diagram 
\begin{equation}\label{comonoid-localization-map}
\nicexy{X \ar[d]_-{\delta} \ar[r]^-{l_X} & L_EX \cong L_E\tensorunit \otimes X \ar[d]^-{\Delta}\\
X \otimes X \ar[r]^-{l_X \otimes l_X} & L_EX \otimes L_EX}\end{equation}
is commutative.  So the localization map $l_X$ extends to a map of non-counital comonoids.

Therefore, every smashing localization $L_E$ lifts to the homotopy category of $K$-coalgebras, where $K$ is the comonad for non-counital comonoids \cite{hkrs} (Cor. 6.3.5).  By Theorem \ref{comonad.preservation.eq} all four conditions there hold.  In particular, every smashing localization on the stable module category preserves non-counital comonoids.

\subsection{Smashing Localizations Preserve Cooperadic Coalgebras}
\label{sec:smashing-operad-coalgebra}

More generally, suppose $\sO$ is a cooperad in the stable module category $\M$ satisfying $\sO(0) = 0$ \cite{hkrs} (Section 6.1.2).  Suppose $L_E$ is a smashing localization on the stable module category $\M$, and suppose $(X,\delta)$ is an $\sO$-coalgebra.  The composite
\[\nicexy@C+.3cm{L_EX \cong L_E\tensorunit \otimes X \ar[d]_-{L_E\tensorunit \otimes \delta} \ar[r]^-{\Delta} & \sO(n) \otimes (L_EX)^{\otimes n}\\
L_E\tensorunit \otimes \sO(n) \otimes X^{\otimes n} \ar[d]_-{\text{switch}}^-{\cong} 
& \sO(n) \otimes (L_EI \otimes X)^{\otimes n} \ar[u]_-{\cong} \\ 
\sO(n) \otimes L_E\tensorunit \otimes X^{\otimes n} \ar[r]^-{\cong} 
& \sO(n) \otimes (L_E\tensorunit)^{\otimes n} \otimes X^{\otimes n} \ar[u]^-{\cong}_-{\text{permute}}}\]
for each $n \geq 1$ gives the localization $L_EX$ the structure of an $\sO$-coalgebra.  The bottom horizontal isomorphism is the $n$-fold version of \eqref{localization-idempotent} and follows from the idempotency of the smashing localization $L_E$:
\[\nicexy@C+.3cm{L_E\tensorunit \cong \underbrace{L_EL_E\cdots L_E}_{n} \tensorunit \cong (L_E\tensorunit)^{\otimes n}.}\]
Similar to \eqref{comonoid-localization-map}, for each $n \geq 1$ the diagram 
\[\nicexy@C+1.4cm{X \ar[d]_-{\delta} \ar[r]^-{l_X\,=\, l_{\tensorunit} \otimes X} 
& L_EX \cong L_E\tensorunit \otimes X \ar[d]^-{L_E\tensorunit \otimes \delta}\\
\sO(n) \otimes X^{\otimes n} \ar[r]^-{\sO(n) \otimes (l_{\tensorunit} \otimes X)^{\otimes n}} 
& \sO(n) \otimes (L_E\tensorunit \otimes X)^{\otimes n} \cong L_E\tensorunit \otimes \sO(n) \otimes  X^{\otimes n}}\]
is commutative.  So the localization map $l_X$ extends to a map of $\sO$-coalgebras.

Suppose $K$ is the comonad for $\sO$-coalgebras \cite{hkrs} (Section 6.1.2).  Assume that $\coalgkm$ (resp., $\coalg(K;L_E\M)$) admits a left-induced model structure via the forgetful functor to $\M$ (resp., $L_E\M$).  Then the above discussion implies that every smashing localization lifts to the homotopy category of $K$-coalgebras.  By Theorem \ref{comonad.preservation.eq} all four conditions there hold.  In particular, every smashing localization preserves $\sO$-coalgebras whose comonad is left-admissible over $\M$ and $L_E\M$.

\begin{remark} \label{remark:stanculescu}
The results in this section may be formulated more generally. Following \cite{stanculescu} (9.2.2 and 9.2.3), we can replace $R = kG$ with any commutative von Neumann regular ring $R$, then fix a cocommutative Hopf algebra $H$ over $R$ for which the classes of projective left $H$-modules and injective left $H$-modules coincide (e.g., this occurs if $H$ is finitely generated and projective as an $R$-module), then endow the category $\M$ of left $H$-modules with the stable module category structure. Stanculescu shows that the category of left $H$-module coalgebras (i.e., comonoids in $\M$) has a left-induced model structure, and the considerations of this section apply. Furthermore, for $A$ a left $H$-module algebra, the category of $A$-corings (that is, comonoids in $(A,A)$-bimodules) has a left induced model structure for which the considerations of this section apply \cite{stanculescu} (9.2.3).
\end{remark}

Lastly, if $Q$ is a quiver (directed graph) and $\cat A$ is an abelian category, then a representation of $Q$ in $\cat A$ is a covariant functor $X: Q \to \cat A$. The stable module category is determined by a Hovey triple \cite{hovey-cotorsion} (Thm. 2.2) that can be lifted to yield a combinatorial stable model structure on the category of quiver representations \cite{quiver} (Thm. 6.3, 7.2). There are many important coalgebras in the setting of quiver representations, e.g., the coalgebra of distributions, quantum groups, and the path coalgebra of a quiver \cite{jara}. Furthermore, many localizations in this context are cataloged in \cite{jara}, which also proves that localizations preserve path coalgebras, string coalgebras, and serial coalgebras, characterizes (co)localizing subcategories of categories of comodules over a coalgebra, constructs localizations associated to any localizing subcategory (and associated to any given subset of vertices in a quiver), and lifts localizations to categories of coalgebras.

\begin{problem}
Extend the results of \cite{jara} to the model structure on quiver representations of \cite{quiver} (Thm. 6.3, 7.2). Investigate whether left-induced model structures exist on the categories of coalgebras, whether the localizations can be formulated as left Bousfield localizations, and whether the lifting result of \cite{jara} can be used to verify the conditions of Theorem \ref{comonad.preservation.eq}.
\end{problem}

\section{Application to Simplicial Settings} \label{sec:simplicial-applications}

In this section, we discuss applications of our main theorems to simplicial settings. We discuss coalgebras in spaces, a general machine for left inducing model structures on coalgebras arising from simplicial Quillen adjunctions, and simplicial presheaves.

\subsection{Spaces}

Let $\cat M$ denote either the category of compactly generated weak Hausdorff spaces or the category of simplicial sets, equipped with the Quillen model structure. In these categories, all operads are admissible (this result is well known and is spelled out in \cite{bm03, white-topological, white-commutative-monoids}). It is natural to ask whether cooperads are admissible. Every space or simplicial set $X$ has a natural cocommutative comonoid structure using the diagonal map $X \to X\times X$. This turns out to be the \textit{only} (counital) comonoidal structure on any object in any Cartesian symmetric monoidal category \cite{peroux-shipley}. Hence, there are no interesting questions about preservation of comonoid structure in these categories. Furthermore, for any comonoid $X$, the category of $X$-comodules is simply the overcategory $X/\cat M$, so there are no interesting questions about model structures and preservation under localization for comodules. 

Similarly, if $H$ is a bimonoid in $\cat M$ then the category of $H$-comodule algebras in $\cat M$ has a model structure left-induced from the usual model structure on monoids and related to Hopf-Galois extensions and (co)descent \cite{hess}(1.3.1). Hence, Theorem \ref{comonad.preservation.eq} can be used to lift localizations from monoids (which can themselves be lifted from spaces using \cite{batanin-white, white-yau}) to $H$-comodule algebras. However, this can be done more directly thanks to the observation that the category of $H$-comodule algebras is isomorphic to the category of monoids over $H$.

However, there are still interesting structures possible as coalgebras over other cooperads. For example, the CDC cooperad \cite{walter} (5.2) is related to Hatcher's $\Delta$-complexes, and many localizations of spaces and simplicial sets are cataloged in \cite{white-localization}.

\begin{problem}
Left-induce a model structure on coalgebras over the CDC cooperad \cite{walter} (5.2), and then use the techniques of Section \ref{sec:preservation-coalgebra} to determine which left Bousfield localizations preserve this structure. 
\end{problem}

There are many other interesting examples of comonads on pointed spaces, detailed in \cite{perrone}. For example, the universal covering endofunctor (on path connected, locally path connected, and semi-locally simply connected spaces) can be viewed as a comonad, whose coalgebras are connected spaces \cite{perrone} (5.3). The stream comonad begins with a topological monoid $N$ (thought of as time, like $\mathbb{R}$), then sends a space $X$ to the space of maps $N\to X$. Coalgebras are dynamical systems \cite{perrone} (5.4). Lastly, if $k$ is a field (with the discrete topology), $\cat M$ is the category of topological $k$-vector spaces, and $C$ is a nicely behaved coalgebra, then the category of $C$-comodules (itself a category of coalgebras over a comonad) admits a version of Hochschild (co)homology \cite{farinati-solotar} (Def. 3.8). In this context, is possible to lift localizations to the level of coalgebras \cite{farinati-solotar} (Sec. 5), and Hochschild cohomology commutes with localization \cite{farinati-solotar} (Sec. 6).

\begin{problem}
Produce left-induced model structures for the categories of coalgebras above, then use Theorem \ref{comonad.preservation.eq} to determine which left Bousfield localizations of spaces preserve these coalgebraic structures. Next, use the lifting results of \cite{farinati-solotar} (Sec. 5) to verify the conditions of Theorem \ref{comonad.preservation.eq} where $\cat M$ is the category of topological vector spaces. Next, extend the results of \cite{farinati-solotar} to topological Hochschild homology.
\end{problem}

Lastly, left-induced model structures on comodules in the category of pointed simplicial sets provide a natural setting for the study of Waldhausen $K$-theory \cite{wald}. If $X$ is a simplicial set and $E_*$ is a generalized reduced homology theory, then there are model structures on the category of comodules over $X_+$ and on the category of retractive spaces $R_X$, left-induced from the Kan model structure or from the $E_*$-local model structure on simplicial sets \cite{wald} (Sec. 4.1). Hence, the condition of Theorem \ref{clockwise.ccw} is met and $E_*$-localization lifts to comodules. Furthermore, this model structure on local comodules may also be right induced \cite{wald} (Sec. 4.2) and hence the equivalent conditions of Theorem \ref{comonad.preservation.eq} are met. The interplay between left Bousfield localization (with respect to twisted homology) and comodule structure is used to provide a comodule model for Waldhausen $K$-theory \cite{wald} (4.3) for spaces that fail to be simply connected.

In addition to coalgebras over cooperads in spaces, we can also study $R$-coalgebras, for a fixed commutative ring $R$, and we do so below.

\subsection{Simplicial model categories}

Every adjunction $
\nicexy@C+.5cm{\cat N \ar@<2pt>[r]^-{L} & \cat M \ar@<2pt>[l]^-{R}}$ gives rise to a comonad $K = LR$ on $\cat M$. There is a forgetful functor $U_K$ from $K$-coalgebras to $\cat M$ and this functor has a right adjoint. There is also a canonical coalgebra functor $Can_K: \cat N \to \coalgk$, defined by $Can_K(X) = L(X)$, described in \cite{hk} (Def. A.3), and it too has a right adjoint. In simplicial settings, cylinder objects can be constructed that allow for left-induced model structures in great generality. Specifically, if $\cat N$ and $\cat M$ are simplicial model categories, if the adjunction is a simplicial Quillen adjunction, and if the forgetful functor $U: \coalgk \to \cat M$ takes every $K$-coalgebra to a cofibrant object of $\cat M$, then $\coalgk$ has a model structure left-induced from $\cat M$ \cite{hk} (Thm. 3.2).

The authors of \cite{hk} consider several examples of this general framework, and we reproduce two of those examples here. First, we consider the free abelian comonad on simplicial abelian groups \cite{hk} (Cor. 3.3), where the model structure on $K$-coalgebras is left induced from the projective model structure on simplicial abelian groups. The model structure on $K$-coalgebras may also be viewed as left-induced from $sSet_*$ along the canonical $K$-coalgebra functor \cite{hk} (Thm. 3.4).
We can use Theorem \ref{comonad.preservation.eq} to obtain preservation results for these coalgebras under the left Bousfield localizations of simplicial sets and of simplicial abelian groups cataloged in \cite{white-localization}. There is also a one-reduced model structure on $K$-coalgebras \cite{hk} (Lemma 3.5), left induced from the model structure on $K$-coalgebras (and hence from simplicial abelian groups, and from simplicial sets), so our results can be used to lift localizations to this setting.

Another example of a simplicial Quillen adjunction satisfying the conditions of \cite{hk} (Thm. 3.2) are suspension and loops adjunctions $(\Sigma^r,\Omega^r)$ for $1\leq r \leq \infty$. Letting $K$ denote the associated comonad of such an adjunction, there are model structures on $K$-coalgebras left-induced along the forgetful functor, from $sSet_*$ in case $r < \infty$ \cite{hk} (Cor. 3.8) and from the positive stable model structure on symmetric spectra in case $r = \infty$ \cite{hk} (Cor. 3.9). Hence, we can apply Theorem \ref{comonad.preservation.eq} in these contexts, to lift left Bousfield localizations of spaces and spectra (cataloged in \cite{white-localization}) to these categories of coalgebras.

\subsection{Simplicial coalgebras} \label{subsec:sCoalg}

Let $R$ be a commutative, unital ring. The category of simplicial $R$-coalgebras has a model structure left induced from the projective model structure on simplicial $R$-modules \cite{raptis} (Thm. A). Furthermore, if $R$ is a field, then the category of coassociative, counital simplicial $R$-coalgebras has a similar left-induced model structure  \cite{sore} (Thm. 3.5), as does the category of cocommutative simplicial $R$-coalgebras \cite{sore} (3.2) first discovered by Goerss.

It is natural to ask whether the Dold-Kan equivalence induces a Quillen equivalence between this model structure on simplicial coalgebras, and the model structure on dg coalgebras of Remark \ref{remark:bounded}. Interestingly, as shown in \cite{sore} (Prop. 4.16), it does not. This stands in stark contrast to the positive results for algebras over operads, obtained in \cite{white-yau3}.

Nevertheless, there is a model structure on simplicial $R$-coalgebras (where $R$ is any commutative, unital ring), left-induced along the Dold-Kan adjunction from the model structure on dg coalgebras of Remark \ref{remark:bounded}:
\begin{equation}\label{eqn:dold-kan}
\nicexy@C+.5cm{\coalg_R(\sMod_R) \ar@<2pt>[r]^-{N} & \coalg_R(\Chr^{\geq 0}) \ar@<2pt>[l]^-{\Gamma_{\coalg}}}
\end{equation}
\noindent by \cite{peroux22} (4.2). With these model structures on $R$-coalgebras, we can apply Theorem \ref{comonad.preservation.eq} to study preservation of coalgebras under the left Bousfield localizations of spaces and of simplicial $R$-algebras considered in \cite{white-localization}.

Furthermore, the category of reduced simplicial sets (i.e., simplicial sets with a single vertex) admits many model structures \cite{rivera} (Thm. 1) whose homotopy theories can be modeled using categories of connected simplicial cocommutative coalgebras over a field $k$ \cite{rivera} (Thm. 2). These model structures are different from those obtained via derived Koszul duality \cite{rivera} (Remark 3.5.5). The first of these model structures on coalgebras is left induced along the following left adjoint functor \cite{rivera} (Thm. 7.3.1(1)) 
$$\Omega = \Cobar \circ N_*: \sCoCoalg_k^0 \to \dgAlg_k.$$
The second is a left Bousfield localization of the first, and the third is a further left Bousfield localization \cite{rivera} (Thm. 7.3.1). Hence, we are in the situation of Theorem \ref{coalgklcm-admissible}, and Theorem \ref{comonad.preservation.eq} can be used to determine when these coalgebraic structures are preserved by the left Bousfield localizations of \cite{rivera}.

\subsection{Simplicial presheaves}

Let $\cat C$ be a small Grothendieck site, and let $\cat M$ denote the category of simplicial presheaves $\sPre(\cat C)$. Let $\cat R$ be a presheaf of commutative unital rings on $\cat C$. Then, there is a model structure on the category of simplicial $\cat R$-coalgebras left induced from Jardine's local model structure on simplicial $\cat R$-algebras \cite{raptis} (Thm. A). That is, Raptis considers coalgebras left induced from a local model structure.

Theorem \ref{coalgklcm-admissible} gives an alternative approach. We could instead pass to coalgebras first, and then localize. In particular, this technique yields a homotopy sheafification functor on the category of coalgebras.

Additionally, there are many interesting left Bousfield localizations of simplicial presheaves that one can now study using Theorem \ref{comonad.preservation.eq}, e.g., the localizations discussed in \cite{chorny-white} related to functor calculus, or the localizations of \cite{Reedy-paper, bous-loc-semi} related to the homotopy theory of locally constant presheaves, with applications to the homotopy theory of $n$-operads \cite{bdw1, bdw2} and to the Baez-Dolan stabilization hypothesis \cite{batanin-white-baez-dolan, white-oberwolfach}.

Lastly, when $k$ is a perfect field, Raptis studies the left Bousfield localization of $\M$ with respect to the class of $\mathscr{F}$-homology equivalences, where $\mathscr{F}$ is the constant presheaf at $k$. Raptis proves that this localization respects the coalgebra structure, but his main result \cite{raptis} (Thm. B) and its extension to the equivariant setting where a Galois group acts \cite{raptis} (Thm. 6.5), are stated at the level of homotopy categories. The techniques of the present paper should make it possible to get stronger statements, on the model category level, and to more deeply study the relationship between left Bousfield localizations of simplicial presheaves, and the model structure on $\cat R$-coalgebras \cite{raptis} (Thm. A).

\subsection{Model structure on algebraically cofibrant objects}

If $\M$ is a combinatorial and simplicial model category, then there is a comonad $c$ whose coalgebras are the algebraically cofibrant objects of $\M$. The forgetful functor $U:\M_c\to \M$ left-induces a model structure on the category $\M_c$ of $c$-coalgebras that is Quillen equivalent to $\M$ \cite{ching-riehl} (Theorem 2.4). This provides a valuable way to replace a given model category with a Quillen equivalent one in which all objects are cofibrant, that the first author used in \cite{chorny-white}. If $\cat C$ is a set of morphisms in $\M$, then $\lcm$ is again combinatorial and simplicial, hence the category of coalgebras $(\lcm)_c$ has a left-induced model structure by \cite{ching-riehl} (Theorem 2.4). Thus, we are in the setting of Theorem \ref{clockwise.ccw}. Because $\M_c$ is Quillen equivalent to $\M$, they have the same homotopy category (and similarly for their left Bousfield localizations), so Definition \ref{crt-comonad} is automatic, and hence any left Bousfield localization satisfies the conditions of Theorem \ref{comonad.preservation.eq} for this comonad. Hence, using \cite{ching-riehl} (Theorem 2.4) to replace a model category by a Quillen equivalent one with all objects cofibrant respects left Bousfield localization. This was an important part of the theory of \cite{chorny-white}, which applied left Bousfield localization to a model category of functors between two model categories, to produce a model structure where the fibrant objects are homotopy functors (i.e., preserve weak equivalences). Further localizations of this model structure provide a natural setting for Goodwillie calculus \cite{chorny-white}.

\end{document}